\newtheorem{theorem}{Theorem}[section]
\newtheorem{prop}[theorem]{Proposition}
\newtheorem{lemma}[theorem]{Lemma}
\newtheorem{corollary}[theorem]{Corollary}
\newtheorem{conj}[theorem]{Conjecture}
\newtheorem{prob}[theorem]{Problem}
\newcounter{Examplecount}
\newenvironment{example}[1][Example \arabic{Examplecount}.]{\vskip3pt \begin{trivlist}
\item[\hskip \labelsep {\bfseries #1}]}{\end{trivlist}\vskip2pt\stepcounter{Examplecount}}
\newcommand\beq{\begin{equation}}
\newcommand\eeq{\end{equation}}
\newcommand\bce{\begin{center}}
\newcommand\ece{\end{center}}
\newcommand\bea{\begin{eqnarray}}
\newcommand\eea{\end{eqnarray}}
\newcommand\bean{\begin{eqnarray*}}
\newcommand\eean{\end{eqnarray*}}
\newcommand\bmt{\begin{multline*}}
\newcommand\emt{\end{multline*}}
\newcommand\ben{\begin{enumerate}}
\newcommand\een{\end{enumerate}}
\newcommand\bit{\begin{itemize}}
\newcommand\eit{\end{itemize}}
\newcommand\brr{\begin{array}}
\newcommand\err{\end{array}}
\newcommand\bt{\begin{tabular}}
\newcommand\et{\end{tabular}}
\newcommand\bs{\bigskip}
\newcommand\ms{\medskip}
\renewcommand\S{\mathcal S}
\newcommand\C{\mathcal C}
\newcommand\wh{\widehat}
\newcommand\wt{\widetilde}
\newcommand\T{{\mathcal T}}
\newcommand\U{{\mathcal U}}
\newcommand\bij{\varphi}
\newcommand\inv{\psi}
\newcommand\biju{\phi}
\newcommand\eps{\varepsilon}
\newcommand\sigm{\wt\sigma}
\newcommand\pim{\wt\pi}
\author{Sergi Elizalde}
\title{Descent sets of cyclic permutations}
\address{Department of Mathematics, Dartmouth College, Hanover, NH 03755-3551}
\begin{document}

\maketitle

\begin{abstract}
We present a bijection between cyclic permutations of $\{1,2,\dots,n+1\}$ and permutations of $\{1,2,\dots,n\}$
that preserves the descent set of the first $n$ entries and the set of weak excedances.
This non-trivial bijection involves a Foata-like transformation on the cyclic notation
of the permutation, followed by certain conjugations.
We also give an alternate derivation of the consequent result about the equidistribution of descent sets using work of Gessel and Reutenauer.
Finally, we prove a conjecture of the author in [{\it SIAM J. Discrete Math.} 23 (2009), 765--786] and a conjecture of Eriksen, Freij and W\"astlund.
\end{abstract}

\keywords{Keywords: descent, cycle, permutation, bijection}

\section{Introduction}\label{sec:intro}

\subsection{Permutations, cycles, and descents}

Let $[n]=\{1,2,\dots,n\}$, and let $\S_n$ denote the set of permutations of $[n]$. We will use both the one-line notation of $\pi\in\S_n$
as $\pi=\pi(1)\pi(2)\dots\pi(n)$ and its decomposition as a product
of cycles of the form $(i,\pi(i),\pi^2(i),\dots,\pi^{k-1}(i))$ with $\pi^k(i)=i$.
For example, $\pi=2517364=(1,2,5,3)(4,7)(6)$. Sometimes it will be convenient to write
each cycle starting with its largest element and order cycles by increasing first element, e.g., $\pi=(5,3,1,2)(6)(7,4)$.

We denote by $\C_n$ the set of permutations in $\S_n$ that consist of one single cycle of length~$n$. We call these {\em cyclic permutations} or {\em $n$-cycles}.
For example, $$\C_3=\{(1,2,3),(1,3,2)\}=\{231,312\}.$$ It is clear that $|\C_n|=(n-1)!$.

Given $\pi\in\S_n$, let $D(\pi)$
denote the {\em descent set} of $\pi$, that is,
$$D(\pi)=\{i\,:\,1\le i\le n-1,\ \pi(i)>\pi(i+1)\}.$$
The descent set can be defined for any sequence of integers $a=a_1 a_2 \dots a_n$
by letting $D(a)=\{i\,:\,1\le i\le n-1,\ a_i>a_{i+1}\}$. We denote by $E(\pi)$ the set of {\em weak excedances} of $\pi$, that is,
$$E(\pi)=\{i\,:\,1\le i\le n,\ \pi(i)\ge i\}.$$

The main result of this paper, which we present in Section~\ref{sec:main}, is a bijection $\bij$
between $\C_{n+1}$ and $\S_n$ with the property that for every $(n+1)$-cycle $\pi$,
$$D(\pi(1)\pi(2)\dots\pi(n))=D(\bij(\pi)).$$
We give a relatively natural description of the map $\bij$. However, the proof that it is a bijection with the desired property is far from trivial, and is done in Section~\ref{sec:proof}.
We also show that $\bij$ preserves the set of weak excedances, namely $E(\pi)=E(\bij(\pi))$.

Let us introduce some notation. For $\pi\in\S_n$, let $\wh\pi$ be the permutation defined by $\wh\pi(i)=n+1-\pi(n+1-i)$ for $1\le i\le n$.
The cycle form of $\wh\pi$ can be obtained by replacing each entry $j$ with $n+1-j$ in the cycle form of $\pi$.
For $1\le i\le n-1$, we have that $i\in D(\wh\pi)$ if and only if $n+1-i\notin D(\pi)$.

We will write $I=\{i_1,i_2,\dots,i_k\}_<$ to indicate that the elements of $I$ are listed in increasing order.
Subsets $I\subseteq[n-1]$ are in bijective correspondence with compositions of $n$ via $\{i_1,i_2,\dots,i_k\}_<\mapsto (i_1,i_2-i_1,\dots,i_k-i_{k-1},n-i_k)$. The partition of $n$ obtained by listing the parts of this composition
in non-increasing order is called the associated partition of $I$. For example, for $n=13$ and $I=\{3,5,8,12\}$, the associated partition is $(4,3,3,2,1)$.

\subsection{Related work}

Following the notation from~\cite{Eli}, let $\T^0_n$ be the set whose elements are $n$-cycles in one-line notation in which one entry has been replaced with $0$.
For example, $\T^0_3=\{031,201,230,012,302,310\}$. Since there are $n$ ways to choose what entry to replace, and the value of the replaced entry can be recovered by looking at the other entries,
it is clear that $|\T^0_n|=n!$. Note that if the $0$ in $\tau\in\T^0_n$ is in position $i$, then $i-1\in D(\tau)$ (if $i>1$) and $i\notin D(\tau)$.
It was conjectured in~\cite{Eli} that descent sets in $\T^0_n$ behave like descent sets in $\S_n$:

\begin{conj}[\cite{Eli}]\label{conj:Eli} For every $n$ and any $I\subseteq[n-1]$,
$$|\{\tau\in\T^0_n\,:\,D(\tau)=I\}|=|\{\sigma\in\S_n\,:\,D(\sigma)=I\}|.$$
\end{conj}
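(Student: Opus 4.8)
The plan is to derive Conjecture~\ref{conj:Eli} as a corollary of the main bijection $\bij\colon\C_{n+1}\to\S_n$, so the real work is to connect the set $\T^0_n$ of ``$n$-cycles with one entry replaced by $0$'' to the set $\C_{n+1}$ of genuine $(n+1)$-cycles, in a way that tracks descent sets. First I would set up a map $\C_{n+1}\to\T^0_n$: given an $(n+1)$-cycle $\pi$ on $\{1,2,\dots,n+1\}$, write it in one-line notation $\pi(1)\pi(2)\dots\pi(n+1)$, delete the last entry $\pi(n+1)$, and then subtract $1$ from every remaining entry that exceeds $\pi(n+1)$ while replacing the (unique) entry equal to $\pi(n+1)-$nothing—more carefully, the cleanest version is: locate the position $i$ with $\pi(i)=n+1$, and observe that in the word $\pi(1)\dots\pi(n)$ the value $n+1$ sits at position $i$; standardize the remaining $n$ values $\{1,\dots,n+1\}\setminus\{n+1\}$... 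Actually the slick route is different: given $\tau\in\T^0_n$ with $0$ in position $i$, add $1$ to every entry, so we get a word on $\{1,\dots,n+1\}\setminus\{?\}$ with a fixed symbol in position $i$; reconstruct the missing value and append it in position $n+1$ so that the resulting word of length $n+1$ is a permutation; check that this permutation is automatically an $(n+1)$-cycle, and that this is a bijection $\T^0_n\to\C_{n+1}$. I would verify the cycle claim by the standard ``fundamental bijection'' type argument: replacing one entry of an $n$-cycle's one-line notation by $0$ records exactly the data of where to splice in the new largest element to keep a single cycle.

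Second, I would check that this bijection between $\T^0_n$ and $\C_{n+1}$ behaves well with respect to descents. The issue is that $D(\tau)$ for $\tau\in\T^0_n$ is a subset of $[n-1]$ computed from the length-$n$ word, while $D(\pi(1)\dots\pi(n))$ for $\pi\in\C_{n+1}$ is also a subset of $[n-1]$ computed from the first $n$ entries of the $(n+1)$-cycle. So I want: under the correspondence above, $D(\tau)=D(\pi(1)\dots\pi(n))$. Adding $1$ to all entries of $\tau$ preserves relative order among the nonzero entries; the only subtlety is the two positions $i-1$ and $i$ adjacent to the $0$, and the remark in the excerpt already notes $i-1\in D(\tau)$ and $i\notin D(\tau)$. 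I would match this against what happens in $\pi(1)\dots\pi(n)$ near the position $i$ carrying the value $n+1$: there $i-1\in D$ (since $n+1$ is maximal, assuming $i>1$) and $i\notin D$ (since $\pi(i)=n+1>\pi(i+1)$ would be a descent—wait, $n+1$ at position $i<n+1$ forces $i\in D$, not $i\notin D$). This sign discrepancy means the naive identification is wrong, and one must instead use the value $0$ (a new \emph{minimum}) on the $\T^0_n$ side versus the value $n+1$ (a new \emph{maximum}) on the $\C_{n+1}$ side, so the correct correspondence should involve the complementation operator $\wt{\phantom{x}}$ from the excerpt, i.e. the bijection $\T^0_n\to\C_{n+1}$ should send $\tau$ to an $(n+1)$-cycle $\pi$ with $D(\pi(1)\dots\pi(n))=[n-1]\setminus D(\wt\tau)$ or similar, after which composing with $\bij$ and the symmetry $\sigma\mapsto\wt\sigma$ on $\S_n$ repairs the descent set.

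Putting it together: compose the bijection $\T^0_n\to\C_{n+1}$ (possibly twisted by complementation) with $\bij\colon\C_{n+1}\to\S_n$, and then, if needed, with $\sigma\mapsto\wt\sigma$, to obtain a bijection $\T^0_n\to\S_n$ that preserves the descent set. Then for any fixed $I\subseteq[n-1]$, restricting this bijection to the fiber over $I$ gives $|\{\tau\in\T^0_n:D(\tau)=I\}|=|\{\sigma\in\S_n:D(\sigma)=I\}|$, which is exactly Conjecture~\ref{conj:Eli}. The main obstacle I anticipate is bookkeeping the descent set at the two positions flanking the special symbol ($0$ versus $n+1$), and correctly inserting the complementation $\wt{\phantom{x}}$ so that a new-minimum on one side is matched with a new-maximum on the other; everything else (that deleting/appending an extreme value and standardizing preserves descents away from those positions, and that the construction lands in $\C_{n+1}$) is routine. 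A secondary point to handle cleanly is the boundary case $i=1$ (the $0$, or the value $n+1$, in the first position), where only one flanking position exists.
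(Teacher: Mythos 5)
Your high-level plan---map $\T^0_n$ to $\C_{n+1}$, apply $\bij$, and use the complementation $\wt{\ }$ to reconcile a new minimum ($0$) on one side with a new maximum ($n+1$) on the other---is exactly the paper's route. The concrete gap is in the first leg. The ``slick route'' you propose (add $1$ to every entry of $\tau$ and append the missing value) does preserve descents, since adding a constant preserves relative order, but it does not produce $(n+1)$-cycles: for $\tau = 031 \in \T^0_3$ you get $1423$, which fixes $1$, and for $\tau = 012$ you get the identity $1234$. The ``check that this permutation is automatically an $(n+1)$-cycle'' that you defer would reveal the failure. Also note that your descent bookkeeping near position $i$ (the ``sign discrepancy'' paragraph) is silently analyzing a map that puts $n+1$ at position $i$, whereas the map you actually defined puts $1$ there, so that paragraph is diagnosing a problem your own construction doesn't have while missing the one it does.

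The paper closes this gap by introducing the intermediate set $\U_n$ of $n$-cycles with one entry replaced by $n+1$. For $\tau' \in \U_n$ obtained from an $n$-cycle $\pi$ by setting $\pi(k):=n+1$, appending the missing value to the one-line notation of $\tau'$ is the same as inserting $n+1$ right after $k$ in the cycle form $(t_1,\dots,t_{n-1},k)$ of $\pi$; the result $\pi'=(t_1,\dots,t_{n-1},k,n+1)$ is genuinely an $(n+1)$-cycle whose first $n$ one-line entries are exactly $\tau'$, so $D(\tau')=D(\pi')\cap[n-1]$ and $\bij$ applies verbatim (this is Corollary~\ref{cor:biju}). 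One then passes from $\T^0_n$ to $\U_n$ via $\tau\mapsto\wt\tau$, which reverses positions and values simultaneously, and applies $\wt{\ }$ once more to $\bij(\pi')$ at the end; the two complementations cancel on the descent set, giving $D(\sigma)=D(\tau)$. So: keep the complementation idea, but drop the ``add $1$'' step entirely, and use $\tau\mapsto\wt\tau\mapsto\pi'$ as the bijection onto $\C_{n+1}$.
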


In Section~\ref{sec:consequences} we prove this conjecture as Corollary~\ref{cor:Elishift}, along with other consequences of our main bijection.

There is some work in the literature relating the cycle structure of a permutation with its descent set.
Gessel and Reutenauer~\cite{GR} showed that the number of permutations with given cycle structure and descent set can be expressed as a product of certain characters of the symmetric group.
They also gave a statistic-preserving bijection between words and multisets of necklaces. In Section~\ref{sec:related} we discuss how their work relates to ours, and how their methods can be used to prove some of our results non-bijectively.

More recently, Eriksen, Freij and W\"astlund~\cite{EFW} studied descent sets of derangements. Recall that derangements are permutations with no fixed points.
In~\cite[Problem~9.3]{EFW}, the authors pose the following question:

\begin{prob}[\cite{EFW}]\label{prob:EFW} For any two subsets $I,J\subseteq[n-1]$ with the same associated partition, give a bijection between derangements of $[n]$ whose descent
set is contained in $I$ and derangements of $[n]$ whose descent set is contained in $J$.
\end{prob}

At the end of Section~\ref{sec:related} we solve this problem by giving a bijection based on the work of Gessel and Reutenauer~\cite{GR}.

\section{The main result}\label{sec:main}

\begin{theorem}\label{thm:bij} For every $n$ there is a bijection $\bij:\C_{n+1}\rightarrow\S_n$ such that if $\pi\in\C_{n+1}$ and $\sigma=\bij(\pi)$, then
$$D(\pi)\cap[n-1]=D(\sigma).$$
\end{theorem}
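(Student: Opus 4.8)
The plan is to construct $\bij$ explicitly and then verify bijectivity by exhibiting an inverse. Given $\pi\in\C_{n+1}$, I first want to record the positions of the value $n+1$: say $\pi(m)=n+1$ for some $m\in[n+1]$, and also $\pi(n+1)=\ell$, i.e., the element mapped to by $n+1$ under the cycle. The word $w=\pi(1)\pi(2)\dots\pi(n)$ is a permutation of $[n+1]\setminus\{\ell\}$ in which $n+1$ sits in position $m$ (if $m\le n$; the case $m=n+1$ must be handled separately, and there $w$ is already a permutation of $[n]$). The descent set $D(\pi)\cap[n-1]$ depends only on $w$. The natural first move is to standardize $w$ — replace it by the unique permutation $\sigma_0\in\S_n$ order-isomorphic to $w$ — since standardization preserves descent sets. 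The difficulty is that standardization is far from injective on $\C_{n+1}$: many $(n+1)$-cycles give the same word up to order-isomorphism, so one needs extra bookkeeping to get a genuine bijection.

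So the core idea I would pursue is to package the lost information (the value $\ell$, equivalently where the "gap" is, and the position $m$ of $n+1$) into a single controlled modification of $w$. Concretely: think of $w$ as a sequence on the alphabet $[n+1]\setminus\{\ell\}$; I want to reinsert information so that exactly one entry gets "shifted." One clean approach: define $\sigma=\bij(\pi)$ by taking the word $\pi(1)\dots\pi(n)$ and decrementing by $1$ every entry that is $>\ell$, turning it into a permutation of $[n]$ — but this only uses $\ell$ and throws away $m$. To recover $\pi$ from $\sigma$ one must know which of the $n$ possible values of $\ell$ was used. The trick I expect the paper to use is that the position $m$ where $n+1$ originally sat carries exactly that missing degree of freedom, and the one-line word of an $(n+1)$-cycle is constrained: not every word on $[n+1]\setminus\{\ell\}$ arises, precisely because the permutation must be a single $(n+1)$-cycle. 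Counting: $|\C_{n+1}|=n!=|\S_n|$, so the map only needs to be injective; I would set up the inverse by reading off from $\sigma$ a canonical choice of $\ell$ (for instance via the cycle structure of $\sigma$ itself, or via a first-entry/position convention) and then reconstructing the $(n+1)$-cycle by inserting $n+1$ and closing up the cycle in the unique way that yields a single cycle.

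The key steps, in order, would be: (1) define the forward map $\bij$ on $\C_{n+1}$ via the word $\pi(1)\dots\pi(n)$ together with an explicit rule for incorporating the data of $\ell$ and $m$, checking it lands in $\S_n$; (2) check $D(\pi)\cap[n-1]=D(\bij(\pi))$, which should follow because the modification changes relative orders only in a way that does not create or destroy descents among the first $n$ positions — this is the step where one must be careful about the boundary, i.e., how the entry in position $n$ compares to its neighbor, and it is probably where the "somewhat technical" remark in the introduction bites; (3) define the candidate inverse $\inv:\S_n\to\C_{n+1}$ by the reconstruction described above; (4) verify $\inv\circ\bij=\mathrm{id}$ and $\bij\circ\inv=\mathrm{id}$, or — since $|\C_{n+1}|=|\S_n|$ — just verify injectivity of $\bij$, which reduces to showing the reconstruction rule genuinely recovers $\ell$ and $m$. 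I expect the main obstacle to be step (2) together with making the case analysis for the position $m\in\{1,\dots,n+1\}$ uniform: handling $m=n+1$ (where $n+1$ does not even appear in the truncated word) and $m=1$ (a forced descent or non-descent at the start) gracefully, and ensuring the rule that encodes $m$ into $\sigma$ never accidentally alters $D(\sigma)$ on $[n-1]$. Using the $\wt{\ \cdot\ }$ operation introduced in the excerpt may help symmetrize the two boundary cases.
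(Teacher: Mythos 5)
Your proposal identifies the right difficulty but does not actually supply the construction that overcomes it, and the route you sketch is not the one the paper takes. Concretely: your ``natural first move'' of standardizing $w=\pi(1)\dots\pi(n)$ is genuinely far from injective, and the degrees of freedom you propose to use to repair it --- the value $\ell=\pi(n+1)$ and the position $m$ of $n+1$ --- are not enough to disambiguate. For instance, with $n=3$, both $\pi=(1,2,4,3)=2413$ and $\pi'=(1,3,2,4)=3421$ lie in $\C_4$, both have $4$ in position $m=2$, and both standardize (after dropping the last entry) to the same permutation $231\in\S_3$; they differ only in $\ell$ ($3$ versus $1$), which is exactly the datum that standardization discards. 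So the candidate inverse ``read off $\ell$ from $\sigma$ by a first-entry or cycle-structure convention'' is not a well-defined reconstruction: for $\sigma=231$ there are at least two choices of $\ell$ that yield an $(n+1)$-cycle. You would need a concrete rule that alters $\sigma$ (not just records side information) in a descent-preserving way, and your steps (1)--(4) leave precisely that rule unspecified. You can also verify from Table~\ref{tab:n4} that the paper's $\bij$ is not standardization: e.g.\ $\pi=(2,4,1,3,5)=34512$ has $\bij(\pi)=1342$, whereas standardization of $3451$ gives $2341$.

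The paper's actual construction works in cycle notation, not one-line notation. One writes $\pi$ as $(t_1,\dots,t_n,n+1)$, cuts the sequence $t_1,\dots,t_n,n+1$ at its left-to-right maxima to produce a permutation $\sigm\in\S_n$ (a Foata-type inverse), and then runs an explicit local switching algorithm (governed by the predicate $P(x,y)$ comparing $\pi$-values and $\sigm$-values) to repair the handful of positions, namely $b_i-1$ and $b_i$, where the descent set of $\sigm$ can disagree with $D(\pi)\cap[n-1]$. The inverse $\inv$ is defined by an analogous algorithm running in the opposite direction. The technical content of the proof (Lemmas~\ref{lem:decrpi}--\ref{lem:bi} and Propositions~\ref{prop:descents} and the bijectivity proposition) is showing that this switching process terminates, never moves the wrong elements, exactly fixes the descent set, and is undone step-for-step by $\inv$. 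This is a genuinely different mechanism from ``standardize and re-insert $\ell,m$,'' and I would encourage you to look at the left-to-right-maxima decomposition of the cycle word as the starting point rather than the truncated one-line word.
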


In this section we define the map $\bij:\C_{n+1}\rightarrow\S_n$ and give some examples. Next we describe a map $\inv:\S_n\rightarrow\C_{n+1}$. In Section~\ref{sec:proof} we will
prove that $\inv=\bij^{-1}$ and that $\bij$ preserves the descent set of the first $n$ entries.

\subsection{The map $\bij$}

Given $\pi\in\C_{n+1}$, write it in cycle form with $n+1$ at the end, i.e., $\pi=(t_1,t_2,\dots,t_n,n+1)$. Let $t_1=t_{i_1}<t_{i_2}<\dots<t_{i_r}<t_{i_{r+1}}=n+1$ be the left-to-right maxima of the
sequence $t_1,t_2,\dots,t_n,n+1$. Let $$\sigm=(t_{1},t_2,\dots,t_{i_2-1})(t_{i_2},t_{i_2+1},\dots,t_{i_3-1})\cdots(t_{i_r},t_{i_r+1},\dots,t_n).$$ To simplify notation, let $a_j=t_{i_j}$ and $b_j=t_{i_{j+1}-1}$ for $1\le j\le r$,
so \beq\label{eq:sigm}\sigm=(a_1,\dots,b_1)(a_2,\dots,b_2)\cdots(a_r,\dots,b_r).\eeq
To obtain $\bij(\pi)$ we will make some changes in $\sigm$, which we describe next. Each change consists of switching two entries in the cycle form of $\sigma$ given in~(\ref{eq:sigm}),
so the cycle type is preserved during the algorithm.
With some abuse of notation, we also denote by $\sigm$ the permutation obtained after each switch, and we write its cycle form as in~(\ref{eq:sigm}) with only the switched entries moved.
We denote by $\Gamma_i$ the $i$-th cycle of $\sigm$, with the cycles written from left to right as in~(\ref{eq:sigm}).
The terms {\em left}, {\em right}, {\em first} (or {\em leftmost}) and {\em last} (or {\em rightmost}) will always assume that the entries within each cycle are also written in this order.
Whenever we have two adjacent elements $s$ and $t$ in a cycle, with $s$ immediately to the left of $t$,
we will say that $s$ {\em precedes} $t$.
For $1\le x,y\le n$, let $P(x,y)$ be the condition
$$\pi(x)>\pi(y) \ \textrm{ and }\  \sigm(x)<\sigm(y).$$
(For $x$ or $y$ outside of these bounds, $P(x,y)$ is defined to be false.)\ms

Repeat the following steps for $i=1,2,\dots,r-1$:
\bit
\item Let $z$ be the rightmost entry of $\Gamma_i$.
If $P(z,z+1)$ or $P(z,z-1)$ holds, let $\eps\in\{-1,1\}$ be such that $P(z,z+\eps)$ holds and $\sigm(z+\eps)$ is largest.
\item Repeat for as long as $P(z,z+\eps)$ holds:
\ben\renewcommand{\labelenumi}{\Roman{enumi}.}
\item\label{it:a} Switch $z$ and $z+\eps$ in the cycle form of $\sigm$.
\item\label{it:b} If the last switch did not involve the leftmost entry of $\Gamma_i$, let $x$ and $y$ be the elements preceding the switched entries.
If $|x-y|=1$, switch $x$ and $y$ in the cycle form of $\sigm$, and repeat step~II.
\item Let $z:=z+\eps$ (the new rightmost entry of $\Gamma_i$).
\een
\eit

Define $\bij(\pi)=\sigm$.

\begin{table}[htb]
$$\begin{array}{|c|c|c|c|c|c|c|}
\hline
\pi\in\C_5 & \sigma=\bij(\pi)\in\S_4 &  D(\pi)\cap[3]=D(\sigma) \\[2pt] \hline
(1,2,3,4,5) = 23451  & 1 2 3 4     & \emptyset \\ \hline
(2,1,3,4,5) = 31452  & 2 1 3 4     & \multirow{3}{*}{\{1\}}  \\
(3,2,1,4,5) = 41253  & 3 1 2 4     &   \\
(4,3,2,1,5) = 51234  & 4 1 2 3     &   \\ \hline
(1,3,2,4,5) = 34251  & 1 3 2 4     & \multirow{5}{*}{\{2\}}  \\
(1,4,3,2,5) = 45231  & 1 4 2 3     &  \\
(3,1,2,4,5) = 24153  & 2 3 1 4     &  \\
(3,1,4,2,5) = 45123  & 3 4 1 2     &  \\
(4,3,1,2,5) = 25134  & 2 4 1 3     &  \\ \hline
(1,2,4,3,5) = 24531  & 1 2 4 3     & \multirow{3}{*}{\{3\}}  \\
(2,4,1,3,5) = 34512  & 1 3 4 2     &  \\
(4,1,2,3,5) = 23514  & 2 3 4 1     &  \\ \hline
(2,3,1,4,5) = 43152  & 3 2 1 4     & \multirow{3}{*}{\{1,2\}}\\
(2,4,3,1,5) = 54132  & 4 2 1 3     & \\
(4,2,3,1,5) = 53124  & 4 3 1 2     & \\ \hline
(1,4,2,3,5) = 43521  & 3 2 4 1     & \multirow{5}{*}{\{1,3\}}\\
(2,1,4,3,5) = 41532  & 2 1 4 3     & \\
(2,3,4,1,5) = 53412  & 4 2 3 1     & \\
(3,4,2,1,5) = 51423  & 4 1 3 2     &  \\
(4,2,1,3,5) = 31524  & 3 1 4 2     &  \\ \hline
(1,3,4,2,5) = 35421  & 1 4 3 2     &  \multirow{3}{*}{\{2,3\}}\\
(3,4,1,2,5) = 25413  & 2 4 3 1     &   \\
(4,1,3,2,5) = 35214  & 3 4 2 1     &  \\ \hline
(3,2,4,1,5) = 54213  & 4 3 2 1     & \{1,2,3\}\\
\hline
\end{array}$$
\caption{\label{tab:n4} The images by $\bij$ of all elements in $\C_5$.}
\end{table}

\begin{example}
Let $$\pi=(11,4,10,1,7,16,9,3,5,12,20,2,6,14,18,8,13,19,15,17,21)\in\C_{21}.$$
Finding the left-to-right maxima of the sequence, we get $$\sigm=(11,4,10,1,7)(16,9,3,5,12)(20,2,6,14,18,8,13,19,15,17).$$

Now we look at the first cycle, so $z=b_1=7$. Both $P(7,6)$ and $P(7,8)$ hold, but $\sigm(6)=14>13=\sigm(8)$, so $\eps=-1$.
Switching $7$ and $6$ we get $$\sigm=(11,4,10,1,{\bf 6})(16,9,3,5,12)(20,2,{\bf 7},14,18,8,13,19,15,17).$$
The entries preceding the switched ones are $1$ and $2$ so we switch them too: $$\sigm=(11,4,10,{\bf 2},6)(16,9,3,5,12)(20,{\bf 1},7,14,18,8,13,19,15,17).$$
Now $z=6$, and since $P(6,5)$ holds, we switch $6$ and $5$:
$$\sigm=(11,4,10,2,{\bf 5})(16,9,3,{\bf 6},12)(20,1,7,14,18,8,13,19,15,17).$$
The entries to their left are $2$ and $3$, so they need to be switched, and then the entries preceding these are $10$ and $9$, so they need to be switched as well:
$$\sigm=(11,4,{\bf 9},{\bf 3},5)(16,{\bf 10},{\bf 2},6,12)(20,1,7,14,18,8,13,19,15,17).$$

Since $P(5,4)$ is false, we now look at $\Gamma_2$, so $z=b_2=12$. Only $P(12,13)$ holds, so $\eps=1$ and we switch $12$ and $13$:
$$\sigm=(11,4,9,3,5)(16,10,2,6,{\bf 13})(20,1,7,14,18,8,{\bf 12},19,15,17).$$
Now $z=13$ and $P(13,14)$ holds, so we switch $13$ and $14$, the preceding entries $6$ and $7$, and also $2$ and $1$:
$$\sigm=(11,4,9,3,5)(16,10,{\bf 1},{\bf 7},{\bf 14})(20,{\bf 2},{\bf 6},{\bf 13},18,8,12,19,15,17).$$
Finally, $z=14$ and $P(14,15)$ holds, so we switch $14$ and $15$, and we stop here because $P(15,16)$ is false:
$$\bij(\pi)=\sigm=(11,4,9,3,5)(16,10,1,7,{\bf 15})(20,2,6,13,18,8,12,19,{\bf 14},17)\in\S_{20}.$$
In one-line notation,
\renewcommand{\tabcolsep}{0pt}
$$\begin{array}{cc*{2}{c@{\,\cdot\,}}*{4}{c@{\ }}*{3}{c@{\,\cdot\,}}*{2}{c@{\ }}*{4}{c@{\,\cdot\,}}c@{\ }c@{\,\cdot\,}c@{\ }c@{\,\cdot\,}c@{\ }c}
\pi&=&7&6&5&10&12&14&16&13&3&1&4&20&19&18&16&9&21&8&15&2&11\\
\bij(\pi)&=&7&6&5&9&11&13&15&12&3&1&4&19&18&17&16&10&20&8&14&2&
\end{array}$$
where the descents have been marked with dots.
\end{example}

\begin{example}
Let $$\pi=(2,9,17,6,11,19,7,13,12,15,8,14,1,4,5,10,18,3,16,20)\in\C_{20}.$$
Inserting parentheses before the left-to-right maxima, we have
$$\sigm=(2)(9)(17,6,11)(19,7,13,12,15,8,14,1,4,5,10,18,3,16).$$
Now $z=b_1=2$, and only $P(2,1)$ holds, so we switch $2$ and $1$:
$$\sigm=({\bf 1})(9)(17,6,11)(19,7,13,12,15,8,14,{\bf 2},4,5,10,18,3,16).$$
In $\Gamma_2$ we have $z=b_2=9$ and $P(9,8)$ holds, so we switch $9$ and $8$.
Now $P(8,7)$ holds, so we switch $8$ and $7$. Similarly, we switch $7$ and $6$, then $6$ and $5$, and then $5$ and $4$, obtaining
$$\sigm=(1)({\bf 4})(17,{\bf 7},11)(19,{\bf 8},13,12,15,{\bf 9},14,2,{\bf 5},{\bf 6},10,18,3,16).$$
Finally, in $\Gamma_3$ we have $z=b_3=11$ and $P(11,10)$ holds, so we switch $11$ and $10$, and also the preceding entries $7$ and $6$:
$$\bij(\pi)=\sigm=(1)(4)(17,{\bf 6},{\bf 10})(19,8,13,12,15,9,14,2,5,{\bf 7},{\bf 11},18,3,16)\in\S_{19}.$$
In one-line notation,
\renewcommand{\tabcolsep}{0pt}
$$\begin{array}{cc*{2}{c@{\ }}c@{\,\cdot\,}*{7}{c@{\ }}*{3}{c@{\,\cdot\,}}*{2}{c@{\ }}*{2}{c@{\,\cdot\,}}c@{\ }c@{\,}c}
\pi&=&4&9&16&5&10&11&13&14&17&18&19&15&12&1&8&20&6&3&7&\cdot2\\
\bij(\pi)&=&1&5&16&4&7&10&11&13&14&17&18&15&12&2&9&19&6&3&8
\end{array}$$
where the descents have been marked with dots.
\end{example}

\subsection{The map $\inv$}

Given $\sigma\in\S_{n}$, write it in cycle form with the largest element of each cycle first, ordering the cycles by increasing first element,
say $$\sigma=(c_1,\dots,d_1)(c_2,\dots,d_2)\cdots(c_r,\dots,d_r).$$ Removing the internal parentheses and appending $n+1$, we obtain an $(n+1)$-cycle
$$\pim=(c_1,\dots,d_1;c_2,\dots,d_2;\dots;c_r\dots,d_r;n+1).$$ For convenience we write semicolons in order to keep track of the places from where parentheses were removed.
We call  the $r+1$ subsequences separated by these semicolons {\em blocks} of $\pim$.
Similarly to the description of $\bij$, we will obtain $\inv(\sigma)$ by making some switches to the cycle form of $\pim$.
At each stage of the algorithm, we denote by $\Delta_i$ the $i$-th block of $\pim$.
For $1\le x,y\le n$, let $Q(x,y)$ be the condition
$$\pim(x)>\pim(y) \ \textrm{ and }\ \sigma(x)<\sigma(y).$$

Repeat the following steps for  $i=r{-}1,r{-}2,\dots,1$:
\bit
\item Let $z$ be the rightmost entry of $\Delta_i$. If $Q(z,z+1)$ or $Q(z,z-1)$ holds, let $\eps\in\{-1,1\}$ be such that $Q(z,z+\eps)$ holds and $\pim(z+\eps)$ is smallest.
\item Repeat for as long as $Q(z,z+\eps)$ holds:
\ben\renewcommand{\labelenumi}{\Roman{enumi}'.}
\item Switch $z$ and $z+\eps$ in the cycle form of $\pim$.
\item If the last switch did not involve the leftmost entry of $\Delta_i$, let $x$ and $y$ be the elements preceding the switched entries. If $|x-y|=1$, switch $x$ and $y$ in the cycle form of $\sigma$, and repeat step~II'.
\item Let $z:=z+\eps$ (the new rightmost entry of $\Delta_i$).
\een
\eit

Define $\inv(\sigma)=\pim$.

\ms

\begin{example}
Let $$\sigma=(11,4,9,3,5)(16,10,1,7,15)(20,2,6,13,18,8,12,19,14,17)\in\S_{20}.$$
Removing the parentheses and appending $n+1$ we get $$\pim=(11,4,9,3,5;16,10,1,7,15;20,2,6,13,18,8,12,19,14,17;21).$$

We start looking at $\Delta_2$, so $z=d_2=15$. Only $Q(15,14)$ holds, so we switch $15$ and $14$:
$$\pim=(11,4,9,3,5;16,10,1,7,{\bf 14};20,2,6,13,18,8,12,19,{\bf 15},17;21).$$
Now $z=14$ and $Q(14,13)$ holds, so we switch $14$ and $13$. The entries to their left are $7$ and $6$, and the entries preceding these are $1$ and $2$, so we make the corresponding switches:
$$\pim=(11,4,9,3,5;16,10,{\bf 2},{\bf 6},{\bf 13};20,{\bf 1},{\bf 7},{\bf 14},18,8,12,19,15,17;21).$$
Now $z=13$ and $Q(13,12)$ holds, so we switch $13$ and $12$:
$$\pim=(11,4,9,3,5;16,10,2,6,{\bf 12};20,1,7,14,18,8,{\bf 13},19,15,17;21).$$

Looking at $\Delta_1$, we have $z=d_1=5$. Only $Q(5,6)$ holds, so we switch $5$ and $6$, and also the preceding entries $3$ and $2$, and $9$ and $10$:
$$\pim=(11,4,{\bf 10},{\bf 2},{\bf 6};16,{\bf 9},{\bf 3},{\bf 5},12;20,1,7,14,18,8,13,19,15,17;21).$$
Now $z=6$ and $Q(6,7)$ holds, so we switch $6$ and $7$, and also the preceding entries $2$ and $1$:
$$\pim=(11,4,10,{\bf 1},{\bf 7};16,9,3,5,12;20,{\bf 2},{\bf 6},14,18,8,13,19,15,17;21).$$
Since $Q(7,8)$ is false, the algorithm ends here, so
$$\inv(\sigma)=(11,4,10,1,7,16,9,3,5,12,20,2,6,14,18,8,13,19,15,17,21)\in\C_{21}.$$
\end{example}

\begin{example}
Let $$\sigma=(1)(4)(17,6,10)(19,8,13,12,15,9,14,2,5,7,11,18,3,16)\in\S_{19}.$$
After removing the parentheses, $$\pim=(1;4;17,6,10;19,8,13,12,15,9,14,2,5,7,11,18,3,16;20).$$

In $\Delta_3$, $z=d_3=10$ and $Q(10,11)$ holds, so we switch $10$ and $11$, and also $6$ and $7$:
$$\pim=(1;4;17,{\bf 7},{\bf 11};19,8,13,12,15,9,14,2,5,{\bf 6},{\bf 10},18,3,16;20).$$
Since $Q(11,12)$ is false, we look at $\Delta_2$, so $z=d_2=4$. We see that both $Q(4,3)$ and $Q(4,5)$ hold, but $\pim(3)=16>6=\pim(5)$, so we switch $4$ and $5$.
Now $z=5$ and $Q(5,6)$ holds, so we switch $5$ and $6$. Similarly, we switch $6$ and $7$, next $7$ and $8$, and then $8$ and $9$:
$$\pim=(1;{\bf 9};17,{\bf 6},11;19,{\bf 7},13,12,15,{\bf 8},14,2,{\bf 4},{\bf 5},10,18,3,16;20).$$
Finally, in the first block we switch $1$ and $2$, ending with
$$\inv(\sigma)=\pim=(2,9,17,6,11,19,7,13,12,15,8,14,1,4,5,10,18,3,16,20)\in\C_{20}.$$
\end{example}

\section{Properties of $\bij$ and $\inv$}\label{sec:proof}
In this section we show that $\bij$ preserves descent sets, that $\bij$ and $\inv$ are inverses of each other, and a few other properties of $\bij$.
 The following five lemmas give more insight into the computation of $\bij(\pi)$. They are valid for each $1\le i\le r-1$.
The $i$-th iteration of the main loop of the algorithm will be sometimes referred to as {\em adjusting $\Gamma_i$}.

\begin{lemma}\label{lem:decrpi}
Suppose that in the process of adjusting $\Gamma_i$, the elements that successively occupy the last position of $\Gamma_i$ are $b,b+\eps,b+2\eps,\dots,b+k\eps$ in this order. Then
$$\pi(b)>\pi(b+\eps)>\pi(b+2\eps)>\dots>\pi(b+k\eps).$$
\end{lemma}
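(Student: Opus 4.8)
The plan is to track the last entry of $\Gamma_i$ through the inner loop and show that the condition $P(z,z+\eps)$, which drives each step, forces the claimed chain of strict inequalities in $\pi$. Recall that in the setup, before any switch is made the last entry of $\Gamma_i$ is $z=b_i$, and $\eps$ is chosen once (at the start of fixing $\Gamma_i$) so that $P(b,b+\eps)$ holds; after that, $\eps$ is fixed for the whole iteration, and step~III replaces $z$ by $z+\eps$ each time step~I is executed. So the successive values of $z$ are exactly $b,b+\eps,b+2\eps,\dots$, and the loop performs step~I precisely when $P(z,z+\eps)$ holds for the current $z$. Thus for each $0\le j\le k-1$ the condition $P(b+j\eps,\,b+(j+1)\eps)$ holds, which by definition means $\pi(b+j\eps)>\pi(b+(j+1)\eps)$. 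Chaining these over $j=0,1,\dots,k-1$ immediately gives $\pi(b)>\pi(b+\eps)>\dots>\pi(b+k\eps)$.

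The one point that needs care is that $\pi$ here is the original $(n+1)$-cycle, not the current state of $\sigm$, so I must check that the quantity appearing in the first half of $P(x,y)$ genuinely refers to the fixed permutation $\pi$ and is never altered by the switches. This is clear from the definition of $P(x,y)$: only the second half, $\sigm(x)<\sigm(y)$, involves the mutable object $\sigm$; the first half, $\pi(x)>\pi(y)$, is evaluated in the input permutation $\pi$, which the algorithm never changes. Hence whenever the loop executes step~I with current rightmost entry $z$, the inequality $\pi(z)>\pi(z+\eps)$ is a statement about $\pi$ alone, valid at that moment and forever after.

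I would therefore organize the argument as a short induction on $j$ (or simply as a telescoping of the $k$ individual inequalities): the elements occupying the last position of $\Gamma_i$ are, by inspection of steps~I and~III, exactly $b,b+\eps,\dots,b+k\eps$ in order; the transition from the $j$-th to the $(j+1)$-st of these happens only because the loop test $P(b+j\eps,b+(j+1)\eps)$ succeeded; reading off the first conjunct of $P$ yields $\pi(b+j\eps)>\pi(b+(j+1)\eps)$; concatenating gives the result. There is essentially no obstacle here beyond bookkeeping — the lemma is really just an unpacking of the loop invariant, and the only thing to be vigilant about is the direction of $\eps$ (it may be $-1$) and the fact that $P(x,y)$ is false when $x$ or $y$ leaves $[n]$, which guarantees the chain has finite length $k$ and that all indices $b,b+\eps,\dots,b+k\eps$ lie in $[n]$, so that $\pi$ is defined at each of them.
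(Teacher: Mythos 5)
Your proposal is correct and takes essentially the same approach as the paper's one-line proof: each step of the inner loop is guarded by $P(z,z+\eps)$, whose first conjunct $\pi(z)>\pi(z+\eps)$ refers to the fixed input $\pi$, so the inequalities chain. Your extra remarks (that $\pi$ is immutable while $\sigm$ is mutable, and that $P$ is false out of range) are sound bookkeeping but not a genuinely different argument.
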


\begin{proof}
For each $1\le j\le k$, the switch between $b+(j{-}1)\eps$ and $b+j\eps$ only takes place if $P(b+(j{-}1)\eps,b+j\eps)$ holds, which implies that
$\pi(b+(j{-}1)\eps)>\pi(b+j\eps)$.
\end{proof}

\begin{lemma}\label{lem:relative} \ben \item\label{partt1} No switch ever takes place between two entries of the same $\Gamma_i$.
\item\label{partt2} The relative order of the entries within $\Gamma_i$ always stays the same. In particular, the first entry of $\Gamma_i$ is always the largest.
\een
\end{lemma}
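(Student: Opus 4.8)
The statement is really two claims about the inner ``Repeat'' loop of step~II when we fix $\Gamma_i$: (1) no switch I or II ever exchanges two entries that both lie in $\Gamma_i$, and (2) consequently the left-to-right order of the entries of $\Gamma_i$ is never disturbed, so in particular the leftmost entry $a_i$ stays leftmost and remains the largest entry of $\Gamma_i$ (it is the largest by construction, being a left-to-right maximum of the $t$-sequence, while every later entry of $\Gamma_i$ is smaller). I would prove both parts simultaneously by induction on the number of switches performed so far during the fixing of $\Gamma_i$ (and, outermost, on $i$, though the earlier iterations only matter through the shape of $\sigm$ at the start of iteration~$i$).

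\textbf{Key steps.} First I would pin down exactly which pairs of positions a switch can involve. A switch of type~I always exchanges the current rightmost entry $z$ of $\Gamma_i$ with $z+\eps$, where $z+\eps$ is an entry of some \emph{other} block: indeed, by Lemma~\ref{lem:decrpi} the successive rightmost entries of $\Gamma_i$ are $b,b+\eps,b+2\eps,\dots$, a monotone sequence of integers, so if $z+\eps$ were already in $\Gamma_i$ it would have to be one of the entries strictly between $a_i$ and the current $z$ in $\Gamma_i$ --- but those entries have already been ``passed'' and by the inductive hypothesis the relative order inside $\Gamma_i$ has not changed, so $z+\eps$ sitting inside $\Gamma_i$ would contradict $z$ being the rightmost entry together with the monotonicity. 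Hence the type~I switch moves an entry of $\Gamma_i$ out (replacing it by a smaller-or-larger neighbour from another block) and is never internal to $\Gamma_i$. Second, for a type~II switch: it exchanges the entries $x,y$ \emph{preceding} the two just-switched entries, under the hypothesis $|x-y|=1$ and the proviso that the last switch did not involve the leftmost entry of $\Gamma_i$. The entry of $\Gamma_i$ involved in the previous switch has a predecessor inside $\Gamma_i$ (this is exactly what the proviso guarantees), call it $x$; its partner $y$ is the predecessor, in another block, of the entry that was switched \emph{into} $\Gamma_i$ or \emph{out of} it. So again one of $x,y$ lies in $\Gamma_i$ and the other does not, and I must check $y\notin\Gamma_i$: this follows because $y$ lies in the block that currently contains $z+\eps$ (or did a moment ago), and that block is not $\Gamma_i$ by the first part; one has to be slightly careful that the cascade of type~II switches walks leftward through $\Gamma_i$ and through the partner block in lockstep, never crossing a block boundary on the $\Gamma_i$ side until the proviso fails and the cascade stops. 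Third, granting that every switch pairs one entry of $\Gamma_i$ with one entry outside $\Gamma_i$, part~\ref{partt2} is immediate: a switch either removes an entry from the right end of $\Gamma_i$ or inserts one at the right end (type~I), or walks a pair leftward inside $\Gamma_i$ keeping their order (type~II), so the word formed by the entries of $\Gamma_i$, read left to right, only has its \emph{values} changed (some deleted from the right, some appended at the right), never its internal order; and since by the stopping rule and Lemma~\ref{lem:decrpi} the process of fixing $\Gamma_i$ only ever swaps $z$ with a neighbour $z\pm1$, the leftmost entry $a_i$ --- which is never touched because of the proviso in step~II --- stays put and stays maximal.

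\textbf{Main obstacle.} The delicate point is the bookkeeping in the type~II cascade: establishing that whenever step~II triggers a switch of the predecessors $x,y$, it is genuinely the case that $x$ is in $\Gamma_i$, $y$ is in the \emph{same} other block that supplied the partner at the previous level, and the indices stay coupled as the cascade marches left. This requires keeping a precise invariant along the cascade --- something like: after the $j$-th switch in the current cascade, the entry just placed in $\Gamma_i$ and its partner just placed in the other block differ by $\eps$ (or by $\pm1$ in general), and their respective predecessors are the next candidates --- and then checking that the proviso ``did not involve the leftmost entry of $\Gamma_i$'' is exactly the condition that lets the cascade continue safely without the $\Gamma_i$-side pointer falling off the left end. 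I expect this to be the step that consumes most of the work; once it is in place, the rest of the lemma is essentially a reading-off of definitions.
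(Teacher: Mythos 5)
Your general plan — show that no switch is internal to $\Gamma_i$, then deduce that the internal order (and hence the first-entry-is-largest property) is preserved — is the same strategy the paper uses, and your analysis of the step-II cascade is essentially right: if the triggering switch pairs one entry of $\Gamma_i$ with one entry outside it, the cascade of predecessor switches keeps pairing one $\Gamma_i$-entry with one outside entry until the proviso halts it. But there are two concrete problems.

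The central gap is in your step-I argument. You claim that if $z+\eps$ already lay in $\Gamma_i$, this would ``contradict $z$ being the rightmost entry together with the monotonicity'' because the earlier rightmost values have been ``passed.'' This reasoning is circular: the fact that the successive rightmost values $b_i, b_i+\eps, \dots$ each enter $\Gamma_i$ from outside is precisely what you are trying to establish, and it is not a consequence of Lemma~\ref{lem:decrpi} (which only records that $\pi$ decreases along that sequence). Nothing in the monotonicity of the value sequence prevents $z+\eps$ from sitting at some interior position of $\Gamma_i$. The contradiction must instead come from the condition $P(z,z+\eps)$ itself: it requires $\sigm(z)<\sigm(z+\eps)$. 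Since $z$ is the rightmost entry of $\Gamma_i$, $\sigm(z)$ is the leftmost entry of $\Gamma_i$, which — by the inductive hypothesis that no internal switch has yet occurred and that external switches preserve relative order — is still the largest element of $\Gamma_i$. If $z+\eps$ were also in $\Gamma_i$, then $\sigm(z+\eps)$ would be some other element of $\Gamma_i$ and hence strictly smaller than $\sigm(z)$, contradicting $P(z,z+\eps)$. This is the argument the paper gives, and without it your step-I case does not close.

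A second, smaller error: you assert that the leftmost entry $a_i$ ``is never touched because of the proviso in step~II.'' That is not what the proviso says; it only stops the cascade from \emph{continuing} once the leftmost entry has been switched, and in fact the leftmost entry of $\Gamma_i$ can be moved (the paper's proof that $\bij$ and $\inv$ are inverses explicitly treats the case ``$a_i$ is moved while fixing $\Gamma_i$''). The conclusion you want — that the first entry of $\Gamma_i$ remains the largest — is still true, but for a different reason: by part~\ref{partt1} every switch involving $\Gamma_i$ exchanges one entry of $\Gamma_i$ with an entry differing from it by~$1$ and lying outside $\Gamma_i$, and such a replacement cannot change which entry of $\Gamma_i$ is largest (there is no other entry of $\Gamma_i$ strictly between the two swapped values). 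That is precisely the paper's short derivation of part~\ref{partt2} from part~\ref{partt1}, and you should use it rather than the incorrect claim that $a_i$ is immobile.
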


\begin{proof}
To prove part~(\ref{partt1}), assume for contradiction that a switch takes place between two entries of $\Gamma_i$. Consider the first such switch, which must necessarily be
between the last element $z$ and another element $z+\eps$ in $\Gamma_i$, with $\eps\in\{-1,1\}$.
For $P(z,z+\eps)$ to hold, we would need $\sigm(z)<\sigm(z+\eps)$. But this cannot happen because $\sigm(z)$ is the first entry of $\Gamma_i$, and hence the largest since by assumption this is the first switch between two entries of $\Gamma_i$.

Part~(\ref{partt2}) follows from part~(\ref{partt1}) observing that since the switches always involve consecutive values,
the relative order of the entries in $\Gamma_i$ never changes in a switch between an entry of $\Gamma_i$ and an entry of another cycle.
\end{proof}

\begin{lemma}\label{lem:notmoved} While adjusting $\Gamma_i$, \ben
\item\label{part1} neither the first nor the last entry of $\Gamma_j$ is moved for any $j>i$; in particular, before iteration $j$, $\Gamma_j=(a_j,\dots,b_j)$;
\item\label{part2} no entry $t$ with $t\ge a_{i+1}$ is moved;
\item\label{part3} no entry preceding an entry $t\ge a_{i+1}$ is moved.
\een
\end{lemma}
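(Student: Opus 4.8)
The plan is to prove the three parts by a simultaneous induction on $i$, and within a fixed $i$, by induction on the number of switches performed while fixing $\Gamma_i$. The base of the outer induction, before any cycle is fixed, is immediate: $\sigm$ has the form~(\ref{eq:sigm}), so $\Gamma_j=(a_j,\dots,b_j)$ for all $j$, and nothing has been moved. For the inductive step, I would assume all three statements hold when fixing $\Gamma_{i'}$ for every $i'<i$, so that in particular $\Gamma_j=(a_j,\dots,b_j)$ for all $j\ge i$ at the start of iteration~$i$; I then track what happens during iteration~$i$.

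The heart of the argument is part~(\ref{part2}), and the key numerical observation is that every switch in the algorithm is between two \emph{consecutive integer values} $t$ and $t+1$. During iteration~$i$, the rightmost entry of $\Gamma_i$ runs through the values $b_i, b_i+\eps, b_i+2\eps,\dots$ (by Lemma~\ref{lem:decrpi}, with the corresponding $\pi$-values strictly decreasing), and a switch of type~\ref{it:a} exchanges this entry $z$ with $z+\eps$. I would argue that all values $z$ that ever occupy the last position of $\Gamma_i$ satisfy $a_i\le z<a_{i+1}$: indeed $z=b_i<a_{i+1}$ at the start since $b_i=t_{i_{i+1}-1}$ while $a_{i+1}=t_{i_{i+1}}$ is a left-to-right maximum exceeding it; and the loop condition forces $P(z,z+\eps)$, hence $\sigm(z)<\sigm(z+\eps)$. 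If at some point we had $z+\eps=a_{i+1}$, then since $a_{i+1}$ is the first (largest) entry of $\Gamma_{i+1}$, we would have $\sigm(a_{i+1})>\sigm(z)$ only if $z$ sits in $\Gamma_{i+1}$ — but $z\in\Gamma_i$ by Lemma~\ref{lem:relative}, so this cannot happen with $\eps=+1$ unless $\sigm(a_{i+1})$ has been lowered, which it has not by the inductive hypothesis that no entry $\ge a_{i+1}$ has been touched so far in iteration~$i$. This is the step I expect to be the main obstacle: making precise, at each switch, why the chain of type-\ref{it:b} switches propagating leftward through the \emph{other} cycles never reaches an entry with value $\ge a_{i+1}$, and why $z$ itself cannot climb up to $a_{i+1}$. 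The tool is again that type-\ref{it:b} switches exchange entries $x,y$ with $|x-y|=1$; one shows inductively that the "preceding" entries being switched are bounded above by the entries of $\Gamma_i$ (which are all $<a_{i+1}$), using that within $\Gamma_i$ the relative order is preserved (Lemma~\ref{lem:relative}(\ref{partt2})) and that the blocks to the right consist of untouched intervals $[a_j,b_j]$.

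Granting part~(\ref{part2}), part~(\ref{part1}) follows: the first and last entries of $\Gamma_j$ for $j>i$ are $a_j\ge a_{i+1}$ and $b_j\ge a_{i+1}$ respectively (in fact $\ge a_{j}\ge a_{i+1}$), hence are never moved, so $\Gamma_j$ retains the form $(a_j,\dots,b_j)$; and then entering iteration~$j$ we have exactly $\Gamma_j=(a_j,\dots,b_j)$ as claimed. Finally, part~(\ref{part3}) is a direct consequence of part~(\ref{part2}) together with the mechanics of the inner loop: an entry $s$ is moved only as the result of a type-\ref{it:a} switch (when $s$ is the current last entry of $\Gamma_i$, so $s<a_{i+1}$ and $s$ does not precede anything relevant) or a type-\ref{it:b} switch; in the latter case $s=x$ or $s=y$ precedes one of the two most recently switched entries, and a type-\ref{it:b} switch only fires when $|x-y|=1$, at which point the entry immediately following $x$ (resp. $y$) was itself just switched and hence, by part~(\ref{part2}) applied inductively, is $<a_{i+1}$; so if $s$ preceded an entry $t\ge a_{i+1}$, that $t$ would have had to move, contradicting part~(\ref{part2}). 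I would organize the write-up so that the induction hypothesis for the inner (switch-count) induction carries all of (\ref{part1})--(\ref{part3}) as an invariant, and each type-\ref{it:a}/type-\ref{it:b} switch is checked to preserve it.
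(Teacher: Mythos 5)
Your overall strategy—outer induction on $i$, using the fact that all switches are between consecutive values, then deducing parts~(\ref{part1}) and~(\ref{part3}) from part~(\ref{part2})—matches the paper's skeleton, but two of your deductions fail, and one of them is structural.

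The first and more serious problem is your claim that part~(\ref{part1}) follows from part~(\ref{part2}) because ``$b_j\ge a_j\ge a_{i+1}$''. This is false: $b_j$ is the \emph{last} entry of $\Gamma_j$, not the largest, and in general $b_j<a_j$. It can even happen that $b_j<a_{i+1}$ (take $\pi=(3,1,4,2,5)$, where $b_2=2<4=a_2$). So part~(\ref{part2}) gives you nothing about whether $b_j$ moves. The paper handles this with a separate argument that has nothing to do with the numerical value of $b_j$: if $b_j$ were to be swapped with the current last entry $z$ of $\Gamma_i$, then $P(z,b_j)$ would require $\pi(z)>\pi(b_j)=a_{j+1}$, but Lemma~\ref{lem:decrpi} gives $\pi(z)\le\pi(b_i)=a_{i+1}<a_{j+1}$. (And $b_j$ cannot be a ``preceding'' entry in step~\ref{it:b} because it is the rightmost entry of its cycle.) You need an argument of this flavor; nothing in your proposal supplies it.

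The second gap is in the step you yourself flagged as the main obstacle, namely why the switch partner $z+\eps$ can never equal $a_{i+1}$. Your justification---``we would have $\sigm(a_{i+1})>\sigm(z)$ only if $z$ sits in $\Gamma_{i+1}$''---is not true: $\sigm(z)$ is the first entry of $\Gamma_i$ and $\sigm(a_{i+1})$ is the second entry of $\Gamma_{i+1}$, and there is no structural reason that one must exceed the other; both are simply values less than $a_{i+1}$. The paper's actual argument is quite different and rests on the pointwise inequality $\sigm_0\le\pi$ (with equality off $\{b_1,\dots,b_r\}$): if $a_{i+1}$ were moved, one deduces that the first entry of $\Gamma_i$ is $a_{i+1}-1$, and then $P(z,z+\eps)$ forces $\pi(z+\eps)<a_{i+1}\le\sigm(z+\eps)$, which contradicts $\sigm_0\le\pi$ once one knows (from the inductive hypothesis on parts~(\ref{part2}) and~(\ref{part3})) that neither $z+\eps$ nor $\sigm(z+\eps)$ has moved. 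That use of the $\sigm_0\le\pi$ invariant is the idea your write-up is missing. (As a smaller point, the lower bound $a_i\le z$ you assert is not justified and is also not needed.) The plan to organize the write-up as an inner induction on switch count with (\ref{part1})--(\ref{part3}) as an invariant is reasonable, but until these two holes are filled the proof does not go through.
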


\begin{proof}
We use induction on $i$. If $i>1$, our induction hypothesis is that all three parts of the lemma hold for smaller values of $i$.
We assume we have adjusted $\Gamma_1,\Gamma_2,\dots,\Gamma_{i-1}$, and neither $a_j$ nor $b_j$ for $j\ge i$ have moved.
In particular, $\sigm(b_i)=a_i$ at the start of the $i$-th iteration. If $i=1$, the argument below proves the base case.

Suppose that in the process of adjusting $\Gamma_i$ we move some $b_j$ with $j>i$. Consider the first time this happens,
and let $z$ be the rightmost entry of $\Gamma_i$ right before the switch. Since $b_i$ was the rightmost entry of $\Gamma_i$ before iteration $i$,
we have by Lemma~\ref{lem:decrpi} that $\pi(z)\le\pi(b_i)=a_{i+1}$.
For the switch between $z$ and $b_j$ to happen, we must have $b_j=z\pm1$ and $P(z,b_j)$ must hold,
which implies that $\pi(z)>\pi(b_j)$.  But we know that $\pi(z)\le a_{i+1}$, $\pi(b_j)=a_{j+1}$, and $a_{i+1}<a_{j+1}$
because the left-to-right maxima of a sequence are increasing, so this is a contradiction.

Suppose now that in the process of adjusting $\Gamma_i$ we move some $a_j$ with $j>i$.
Consider the first time this happens. Since switches only take
place between consecutive values and the sequence  $a_1,a_2,\dots$ is increasing, we must have $j=i+1$, and there must be some element in $x$ in $\Gamma_i$ with $|a_{i+1}-x|=1$.
Now, the facts that $a_{i+1}$ is larger than all the elements of $\Gamma_i$ and that $\Gamma_i$ starts with its largest element,
which we know from Lemma~\ref{lem:relative}(\ref{partt2}), imply that $x$ is the first entry of $\Gamma_i$ and $a_{i+1}=x+1$. However, we claim that in this case no switch
takes place. Indeed, for any switch to take place, $P(z,z+\eps)$ must hold for some $\eps\in\{-1,1\}$, where $z$ is the last entry of $\Gamma_i$. This means that
$\pi(z+\eps)<\pi(z)\le\pi(b_i)=a_{i+1}=x+1$ and $\sigm(z+\eps)>\sigm(z)=x$, so $\pi(z+\eps)<\sigm(z+\eps)$. For this to hold,
$z+\eps$ or the entry following it, namely $\sigm(z+\eps)$, have been moved in a previous step of the algorithm. But the fact that $\sigm(z+\eps)\ge a_{i+1}$ makes this impossible, by the induction hypothesis on parts~(\ref{part2}) and~(\ref{part3}).

Now we prove part~(\ref{part2}). At the beginning of the computation of $\bij(\pi)$, when $\sigm$ is given by Equation~(\ref{eq:sigm}), $a_{i+1}$ is larger than all the elements in $\Gamma_1,\dots,\Gamma_i$. Since all the switches involve
consecutive values, no $t\ge a_{i+1}$ can be involved in a switch with elements of $\Gamma_1,\dots,\Gamma_i$ without $a_{i+1}$ being involved in a switch first. But we just proved that this cannot happen.

To prove part~(\ref{part3}), assume without loss of generality that the entry $s$ preceding $t$ is moved for the first time while adjusting $\Gamma_i$.
Since $t$ has not been moved, the switch must happen in step I of the $i$-th iteration, and it must involve $s$ and the last entry of $\Gamma_i$ at the time, say $z$. For this switch to take place,
we need $\pi(z)>\pi(s)$. But this cannot happen because $\pi(z)\le\pi(b_i)=a_{i+1}$, and since neither $s$ nor $t$ have moved so far, $\pi(s)=t\ge a_{i+1}$.
\end{proof}

\begin{lemma}\label{lem:notback} While adjusting $\Gamma_i$, no entries in cycles $\Gamma_j$ with $j<i$ are moved.
\end{lemma}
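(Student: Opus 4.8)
The statement asserts that once we pass to the $i$-th iteration (fixing $\Gamma_i$), the cycles $\Gamma_1,\dots,\Gamma_{i-1}$ are frozen — no entry of any of them is ever moved again. The natural approach is an inductive argument paralleling the one used for Lemma~\ref{lem:notmoved}: assume the lemma (and Lemma~\ref{lem:notmoved}) hold for all smaller indices, and suppose for contradiction that while fixing $\Gamma_i$ some entry lying in $\Gamma_j$ with $j<i$ is moved. Take the first such switch and let $z$ be the rightmost entry of $\Gamma_i$ at that moment. A switch in step~I is between $z$ and $z\pm 1$, so the moved entry $w\in\Gamma_j$ satisfies $w=z\pm1$ and $P(z,w)$ holds, giving $\pi(z)>\pi(w)$ and $\sigm(z)<\sigm(w)$. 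A switch in step~II moves the two entries $x,y$ preceding the just-switched pair, with $|x-y|=1$; I would argue that if $w$ is such a predecessor $x$ (or $y$), then its partner lies in the same cycle $\Gamma_j$ (since the relative order inside each $\Gamma_j$ is preserved, by Lemma~\ref{lem:relative}), so it suffices to rule out the step~I switch that would have initiated this chain — i.e., it is enough to derive a contradiction from $P(z,w)$ with $w\in\Gamma_j$, $j<i$.

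To get the contradiction I would exploit the left-to-right-maxima structure together with what we already know about the first $i-1$ iterations. First, by Lemma~\ref{lem:decrpi}, $\pi(z)\le\pi(b_i)=a_{i+1}$, so $\pi(w)<\pi(z)\le a_{i+1}$. Now I claim $\sigm(w)$ — the entry immediately following $w$ in the cycle form at this stage — is at most $a_j \le a_i < a_{i+1}$: indeed $\Gamma_j$ originally reads $(a_j,\dots,b_j)$ with all entries in the interval determined by consecutive left-to-right maxima, hence all at most $a_{j+1}-1 < a_{i+1}$; and although Lemmas~\ref{lem:notback}, \ref{lem:notmoved} for smaller $i$ say the first and last entries of $\Gamma_j$ have not moved and indeed (by induction) nothing in $\Gamma_j$ has moved before iteration $i$, so $\sigm(w)$ is still the original successor of $w$ inside $\Gamma_j$ (or $a_j$ if $w=b_j$). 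Either way $\sigm(w)<a_{i+1}$. Combining, $\pi(w)<a_{i+1}$ and $\sigm(w)<a_{i+1}$; but $\sigm(z)<\sigm(w)<a_{i+1}$ forces $\sigm(z)<a_{i+1}$, whereas $\sigm(z)$ is the first entry of $\Gamma_i$, which by Lemma~\ref{lem:relative}(\ref{partt2}) is its largest element and — since $\Gamma_i$ started as $(a_i,\dots,b_i)$ and nothing $\ge a_{i+1}$ has entered it (Lemma~\ref{lem:notmoved}(\ref{part2})) — lies strictly between $a_i$ and $a_{i+1}$; in particular $\sigm(z)\ge a_i$. This is not yet an outright contradiction, so the real work is to sharpen the comparison: I would instead track $\sigm(w)$ more carefully and show $\sigm(z)<\sigm(w)$ is incompatible with both values being squeezed into the block $[a_j,b_j]\subsetneq[1,a_{i+1}-1]$ while $z$'s cycle-successor lives in the disjoint block $[a_i,b_i]$, a block that sits \emph{above} block $j$; concretely, $\sigm(z)\ge a_i>b_j\ge\sigm(w)$, contradicting $\sigm(z)<\sigm(w)$.

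The cleanest route, and the one I would actually write, is therefore: observe that at the start of iteration $i$ every $\Gamma_j$ with $j<i$ has already been fixed, and Lemma~\ref{lem:notmoved} (applied with index $j$) tells us precisely which entries are where; in particular the cycle-successor function $\sigm$ restricted to the positions inside $\Gamma_j$ still takes values in the half-open block of values $\{a_j,a_j+1,\dots,a_{j+1}-1\}$, because fixing $\Gamma_j$ only swaps entries of $\Gamma_j$ with entries of $\Gamma_{j+1},\dots$ that are themselves $<a_{j+1}$ (this is exactly the content of Lemma~\ref{lem:notmoved}(\ref{part2}) for index $j$). Meanwhile $z\in\Gamma_i$ has $\sigm(z)\ge a_i>a_{j+1}-1\ge\sigm(w)$ whenever $w\in\Gamma_j$, $j<i$. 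So $P(z,w)$, which demands $\sigm(z)<\sigm(w)$, is impossible, and no step~I switch with $\Gamma_j$ can occur; since step~II switches only propagate within the cycles already touched (again Lemma~\ref{lem:relative}), no entry of $\Gamma_j$ is moved at all.

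The main obstacle I anticipate is bookkeeping the value of $\sigm(w)$ correctly: "$w$ has not moved" (from the inductive hypothesis) does not by itself pin down $\sigm(w)$, since $w$'s successor could in principle have been altered while fixing some $\Gamma_{j'}$ with $j'\le j$. Making the argument airtight requires the right invariant — that after fixing $\Gamma_j$, the restriction of the cycle-successor map to the block of positions of $\Gamma_j$ still takes values $<a_{j+1}$ — and verifying that this invariant is maintained using Lemma~\ref{lem:notmoved}. Once that invariant is in hand the contradiction is immediate; isolating and proving it cleanly is the crux.
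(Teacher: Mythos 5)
Your high-level strategy matches the paper's: reduce to the step~I switch (step~II switches propagate only between the two cycles already involved), take the first time $z$, the rightmost entry of $\Gamma_i$, is switched with an entry $w=z+\eps$ of some $\Gamma_j$ with $j<i$, and derive a contradiction from the inequality $\sigm(z)<\sigm(w)$ required by $P(z,w)$. The paper's own proof is exactly this, in three lines: $\sigm(z)$ is the first entry of $\Gamma_i$, and it is larger than every element of $\Gamma_1,\dots,\Gamma_{i-1}$, hence larger than $\sigm(w)\in\Gamma_j$.

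However, the two specific facts you use to justify this comparison are both wrong. First, you assert that ``nothing in $\Gamma_j$ has moved before iteration $i$'' and hence $\sigm(w)$ is still the \emph{original} successor of $w$ in $\Gamma_j$. That is false: during iteration $j$ itself (which occurs before iteration $i$), entries of $\Gamma_j$ are switched with entries of later cycles; indeed that is what ``fixing $\Gamma_j$'' does. Lemma~\ref{lem:notmoved}(\ref{part1}) only protects $\Gamma_j$ during iterations with index \emph{less than} $j$, not during iteration $j$. Second, and more seriously, you write $\sigm(z)\ge a_i$. This too is false in general: as the paper explains in the proof of the invertibility proposition (the ``general case'' where $a_i$ moves), step~II switches can propagate back to the leftmost entry of $\Gamma_i$ and replace it by a smaller value, so after a few switches the first entry of $\Gamma_i$ can be $a_i-1$, $a_i-2$, etc.\ --- strictly below $a_i$ --- and this can happen \emph{before} any purported switch with a $\Gamma_j$, $j<i$. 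So the inequality chain $\sigm(z)\ge a_i>a_{j+1}-1\ge\sigm(w)$ has a broken first link.

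What actually saves the argument is the weaker, correct invariant: \emph{at every moment up to the purported switch, the first entry of $\Gamma_i$ exceeds every element currently lying in $\Gamma_1,\dots,\Gamma_{i-1}$.} At the start of iteration $i$ this holds because the first entry is $a_i$ (Lemma~\ref{lem:notmoved}(\ref{part1})) and, by Lemma~\ref{lem:notmoved}(\ref{part2}) applied to iterations $1,\dots,i-1$, nothing $\ge a_i$ has entered $\Gamma_{<i}$. It is then preserved: whenever the first entry drops from $x$ to $x-1$, the value $x-1$ came from some $\Gamma_\ell$ with $\ell>i$ (if it had come from $\Gamma_j$, $j<i$, that would already be the forbidden switch), so $x-1\notin\Gamma_{<i}$, and since every element of $\Gamma_{<i}$ was $<x$ it is in fact $<x-1$. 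This is the precise invariant the paper is invoking; your proof needs it in place of the over-strong bound $\sigm(z)\ge a_i$.
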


\begin{proof} Suppose this is false, and consider the first time that the last entry $z$ of $\Gamma_i$ is switched with an entry $z+\eps$ of $\Gamma_j$, where $j<i$.
Then we must have $\sigm(z)<\sigm(z+\eps)$. But $\sigm(z)$ is the first entry of $\Gamma_i$, which is larger than any element in $\Gamma_1,\Gamma_2,\dots,\Gamma_{i-1}$, using Lemma~\ref{lem:notmoved}(\ref{part1})
and the fact that all switches involve consecutive values. In particular, $\sigm(z)$ is larger than $\sigm(z+\eps)$.
\end{proof}

\begin{lemma}\label{lem:bi}
In the process of adjusting $\Gamma_i$, the elements that successively occupy the last position of $\Gamma_i$ are $b_i,b_i+\eps,b_i+2\eps,\dots,b_i+k\eps$ for some $k$, in this order. Additionally,
$$\pi(b_i)>\pi(b_i+\eps)>\pi(b_i+2\eps)>\dots>\pi(b_i+k\eps).$$
\end{lemma}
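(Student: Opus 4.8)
The plan is to establish Lemma~\ref{lem:bi} essentially by combining the structural facts already proven, with the only new ingredient being that the rightmost position of $\Gamma_i$ is vacated only when the switch described in step~I actually fires, i.e., the sequence of elements that sit in that position really is an arithmetic progression with common difference $\eps$. First I would recall that $\eps$ is fixed once and for all at the start of the $i$-th iteration (it is chosen before the inner loop and never redefined), so the only way the rightmost entry of $\Gamma_i$ changes is through step~I, where $z$ is swapped with $z+\eps$ and then we set $z:=z+\eps$ in step~III. Hence the successive occupants of the last position of $\Gamma_i$ are $b_i$ (the value before iteration $i$, using Lemma~\ref{lem:notmoved}(\ref{part1})), then $b_i+\eps$, then $b_i+2\eps$, and so on, for exactly as many times as the inner loop runs, say $k$; this is the first assertion.

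Next, for the chain of inequalities I would simply invoke Lemma~\ref{lem:decrpi} with $b=b_i$: that lemma says precisely that if the occupants of the last position of $\Gamma_i$ are $b,b+\eps,\dots,b+k\eps$ in order then $\pi(b)>\pi(b+\eps)>\dots>\pi(b+k\eps)$, and we have just identified $b=b_i$. So the second assertion is immediate. The one subtlety worth spelling out is why $b_i$ genuinely is the rightmost entry of $\Gamma_i$ at the moment iteration $i$ begins: this is Lemma~\ref{lem:notmoved}(\ref{part1}) applied with $j=i$, which tells us that before iteration $i$ we still have $\Gamma_i=(a_i,\dots,b_i)$, so its last entry is $b_i$ and $\sigm(b_i)=a_i$ at that point.

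The main obstacle, such as it is, is making sure that no \emph{other} switch inside iteration $i$ (those in step~II, between the entries preceding the switched ones) ever disturbs the last position of $\Gamma_i$. But step~II only swaps the \emph{predecessors} $x$ and $y$ of the two entries just switched, and the last position of a cycle has no successor, so it can never be a predecessor of anything within that iteration's cascade; moreover by Lemma~\ref{lem:relative}(\ref{partt1}) no switch is ever internal to $\Gamma_i$, and by Lemma~\ref{lem:notback} no cycle $\Gamma_j$ with $j<i$ is touched, while Lemma~\ref{lem:notmoved}(\ref{part1}) handles $j>i$. Thus the only modification to the rightmost slot of $\Gamma_i$ during iteration $i$ is the deliberate one in step~I, and the lemma follows. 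I would write this up as a short paragraph citing Lemmas~\ref{lem:decrpi}, \ref{lem:relative}, \ref{lem:notmoved}, and~\ref{lem:notback}, with the bulk of the text devoted to the bookkeeping that $z$ advances by $\eps$ at each pass of the inner loop and to the observation that step~II never affects the last position.
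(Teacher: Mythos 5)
Your proof is correct and takes essentially the same route as the paper, which disposes of this lemma in one line by citing Lemma~\ref{lem:decrpi} together with Lemma~\ref{lem:notmoved}(\ref{part1}) and the description of $\bij$; you simply unpack that terse argument, correctly identifying that $\eps$ is fixed for the whole iteration, that the rightmost slot of $\Gamma_i$ changes only through step~I, and that $b_i$ is its initial occupant by Lemma~\ref{lem:notmoved}(\ref{part1}). Your additional appeals to Lemma~\ref{lem:relative}(\ref{partt1}) and Lemma~\ref{lem:notback} are harmless but not strictly needed, since the observation that step~II only ever touches \emph{predecessors} (hence never the rightmost slot) already suffices.
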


\begin{proof}
By Lemma~\ref{lem:notmoved}(\ref{part1}), the last position of $\Gamma_i$ at the start of the $i$-th iteration is $b_i$.
The rest follows easily from the description of $\bij$ and Lemma~\ref{lem:decrpi}.
\end{proof}

Now we prove the main property of $\bij$, namely that it preserves the descent set if we forget $\pi(n+1)$.

\begin{prop}\label{prop:descents}
Let $\pi\in\C_{n+1}$ and $\sigma=\bij(\pi)$. Then $$D(\pi)\cap[n-1]=D(\sigma).$$
\end{prop}

\begin{proof}
First observe that if $\sigm$ is the permutation in Equation~(\ref{eq:sigm}), before any cycles are adjusted, then $\sigm(x)=\pi(x)$ for all $x\notin\{b_1,\dots,b_r\}$, and $\sigm(b_i)<\pi(b_i)$ for $1\le i\le r$.

Let $W=(D(\pi)\cap[n-1])\bigtriangleup D(\sigm)$
be the set of indices where the descents of $\pi$ and $\sigm$ disagree ($\bigtriangleup$ denotes the symmetric difference). Before adjusting any cycles, the only indices that may be in $W$
are $b_i-1$ and $b_i$ for $1\le i\le r-1$, by the previous observation.
We claim that adjusting cycle $\Gamma_i$ removes $b_i-1$ and $b_i$ from $W$ (if they were in it) without adding any other elements to $W$.
Indeed, by Lemma~\ref{lem:bi}, the first step in iteration $i$ checks $P(b_i,b_i-1)$ and $P(b_i,b_i+1)$, which determine whether $b_i-1$ and $b_i$ are in $W$, respectively. If either of them is, the
switch between $b_i$ and $b_i+\eps$ (with $\eps$ chosen so that $\sigm(b_i+\eps)$ is largest) performed by $\bij$ in step I of the $i$-th iteration guarantees that $b_i-1,b_i\notin W$ after the switch.
However, two elements could now have been added to $W$:\bit
\item If $b_i$ was not the first entry of $\Gamma_i$ (note that by Lemma~\ref{lem:notmoved}(\ref{part1}) we know that $b_i+\eps$ was not the first entry of its cycle)
and the entries preceding $b_i$ and $b_i+\eps$ were consecutive, say $s$ and $s+1$,
then the switch between $b_i$ and $b_i+\eps$ adds $s$ to $W$. Step II of the $i$-th iteration switches $s$ and $s+1$ so that $s$ is no longer in $W$, and next
performs any necessary switches to prevent any other indices from being added to $W$.
\item It is possible that since $\sigm(b_i+\eps)$ has changed in step I, the relative order of $\sigm(b_i+\eps)$ and $\sigm(b_i+2\eps)$ is now different from the relative order of $\pi(b_i+\eps)$ and $\pi(b_i+2\eps)$.
The condition $P(b_i+\eps,b_i+2\eps)$ determines whether this is the case, and if so, the second repetition of step I
switches $b_i+\eps$ and $b_i+2\eps$ in the cycle form of $\sigm$ to fix the problem. Again, step II prevents other elements from being added to $W$.
\eit
These steps are repeated until for some $k$, $P(b_i+k\eps,b_i+(k{+}1)\eps)$ is false, which means that either $b_i+(k{+}1)\eps\in\{0,n+1\}$
or the relative order of $\sigm(b_i+k\eps)$ and $\sigm(b_i+(k{+}1)\eps)$ agrees with the relative order of $\pi(b_i+k\eps)$ and $\pi(b_i+(k{+}1)\eps)$.
Iteration $i$ ends here; at this time, the descent set of the sequence
$\sigm(b_i-\eps)\sigm(b_i)\sigm(b_i+\eps)\dots\sigm(b_i+k\eps)\sigm(b_i+(k{+}1)\eps)$ agrees with the descent set of
$\pi(b_i-\eps)\pi(b_i)\pi(b_i+\eps)\dots\pi(b_i+k\eps)\pi(b_i+(k{+}1)\eps)$, and the only elements that may remain in $W$ are $b_j-1$ and $b_j$ for $i< j\le r-1$.
After iteration $r-1$, we have $W=\emptyset$, so the result is proved.
\end{proof}

The next result describes two more properties of our bijection. Recall that $E(\tau)$ denotes the set of weak excedances of a permutation $\tau$.
We remark that $\pi\in\C_{n+1}$ has no fixed points when $n\ge1$, so its weak excedances are just its excedances, namely indices $i$ with $\pi(i)>i$.

\begin{prop}\label{prop:properties}
Let $\pi\in\C_{n+1}$ and $\sigma=\bij(\pi)$, where $n\ge1$. Then \ben
\item\label{n} $\pi^{-1}(n+1)=\sigma^{-1}(n)$,
\item\label{wexc} $E(\pi)=E(\sigma)$.
\een
\end{prop}

\begin{proof}
To prove part~(\ref{n}), note that if $\sigm$ is the permutation in Equation~(\ref{eq:sigm}), then
$\pi(b_r)=n+1$ and $\sigm(b_r)=a_r=n$. By Lemma~\ref{lem:notmoved}(\ref{part1}), $a_r$ and $b_r$ are never moved when adjusting the cycles $\Gamma_1,\dots,\Gamma_{r-1}$, so $\sigma(b_r)=n$.
It follows that $\pi^{-1}(n+1)=b_r=\sigma^{-1}(n)$.

To prove part~(\ref{wexc}), we first observe that before any cycles are adjusted in $\sigm$, we have that $\sigm(x)=\pi(x)$ for all $x\notin\{b_1,\dots,b_r\}$ with $1\le x\le n$. We have $\pi(n+1)<n+1$,
and for $1\le i\le r$, $\pi(b_i)=a_{i+1}$ and $\sigm(b_i)=a_i$, so $b_i$ is a weak excedance in both $\pi$ and $\sigm$. Thus $E(\pi)=E(\sigm)$.

Next we show that the set of weak excedances of $\sigm$ is preserved in each switch performed by $\bij$.
Let $1\le x\le n$, and let $\sigm$ and $\sigm'$ the permutations before and after such a switch, respectively.
We claim that $x$ is a weak excedance of $\sigm$ if and only if it is a weak excedance of $\sigm'$. Here are all the possible cases:
\bit
\item If the switch involves neither $x$ nor $\sigm(x)$, the claim is trivial because $\sigm(x)=\sigm'(x)$.
\item If the switch involves $\sigm(x)$ but not $x$ (in particular $\sigm(x)\neq x$), then $\sigm(x)$ must be switched with $\sigm(x)\pm1$, since all switches involve consecutive values,
so $\sigm'(x)=\sigm(x)\pm1$. But since $x$ is not involved in the switch, $x\neq\sigm'(x)$, so $x\in E(\sigm)$ if and only if $x\in E(\sigm')$.
\item If the switch involves $x$, let us consider two cases according to whether the switch is done in step I or II of the algorithm.
If it takes place in step I, it means that either $x$ or the entry that it is switched with, which must be $x\pm1$,
is the rightmost entry $z$ of a cycle. We show that in this case, the two switched entries $z$ and $z+\eps$ are weak excedances both before and after the switch, so $E(\sigm)=E(\sigm')$.
Indeed, the rightmost entry of a cycle is always a weak excedance by Lemma~\ref{lem:relative}(\ref{partt2}), so $z\in E(\sigm)$ and $z+\eps\in E(\sigm')$.
On the other hand, for $P(z,z+\eps)$ to hold, $z\le \sigm(z)<\sigm(z+\eps)$, so $\sigm(z+\eps)\ge z+\eps$ and $z+\eps\in E(\sigm)$
(in fact $\sigm(z+\eps)> z+\eps$ because by Lemmas~\ref{lem:notmoved}(\ref{part1}) and~\ref{lem:notback}, $z+\eps$ cannot be a fixed point).
Finally, $\sigm'(z)=\sigm(z+\eps)>z$, so $z\in E(\sigm')$.\\
 If the switch takes place in step II, then $\sigm'(x)=\sigm(x)\pm1$ and $x$ is not a fixed point of neither $\sigm$ nor $\sigm'$, so again $x\in E(\sigm)$ if and only if $x\in E(\sigm')$.
In fact, if $\sigm''$ was the permutation right before the previous switch, we have $\sigm''(x)=\sigm'(x)$.
\eit
\end{proof}

Our last goal in this section is to show that $\bij$ and $\inv$ are inverses of each other. For this purpose, we introduce some notation and a few lemmas leading to the proof of this fact,
which is the statement of Proposition~\ref{prop:inverses} below.

Let $\pi\in\C_{n+1}$ and $\sigma=\bij(\pi)$. We will prove that $\inv(\sigma)=\pi$ by showing that the switches performed by the $i$-th iteration of $\bij$ on $\pi$
(that is, while $\Gamma_i$ is adjusted) are the same in reverse order as the switches performed by the iteration of $\inv$ on $\sigma$ corresponding to the same $i$.
We will call this the $i$-th iteration of $\inv$, despite the fact that this convention makes the iteration number in $\inv$ decrease from $r-1$ to $1$.
For the rest of this section, the value of $i$ is fixed, with $1\le i\le r-1$, and we focus on iteration $i$ of $\bij$ (i.e., of the algorithm that computes $\bij(\pi)$)
and $\inv$ (i.e., of the algorithm that computes $\inv(\sigma)$).

By Lemma~\ref{lem:bi}, the elements that successively occupy the last position of $\Gamma_i$ during iteration $i$ of $\bij$ are $b_i,b_i+\eps,b_i+2\eps,\dots,b_i+k\eps$ for some $k\ge0$.
For $1\le j\le k$, we call {\it subiteration $j$} (of iteration $i$) the set of switches that begin with
the switch between $b_i+(j{-}1)\eps$ and $b_i+j\eps$ (step I) followed by the subsequent switches of the preceding entries (step II).
In general, subiteration $j$ switches a set of adjacent entries of $\Gamma_i$, the rightmost one of which is $b_i+(j{-}1)\eps$,
with a set of adjacent entries, the rightmost one of which is $b_i+j\eps$.
This latter set of adjacent entries must be outside and to the right of $\Gamma_i$ by Lemmas~\ref{lem:relative}(\ref{partt1}) and~\ref{lem:notback}, and must
belong to the same $\Gamma_\ell$ with $\ell>i$ by Lemma~\ref{lem:notmoved}(\ref{part1}).
The set of (adjacent) positions of these entries will be called the $j$-th {\em cluster} and denoted~$R_j$.

If $k=0$, no changes are made in iteration $i$ of $\bij$. Let us now show that in this simple case, no switches take place in iteration $i$ of $\inv$ either.
Indeed, for $Q(b_i,b_i+\eps)$ to hold, we would need $\pim(b_i)>\pim(b_i+\eps)$ and $\sigma(b_i)<\sigma(b_i+\eps)$.
By Lemma~\ref{lem:notmoved}(\ref{part1}), the first inequality can be written as $a_{i+1}>\pim(b_i+\eps)$. By Proposition~\ref{prop:descents}, the second inequality is equivalent to
$a_{i+1}=\pi(b_i)<\pi(b_i+\eps)$. But this implies, using Lemma~\ref{lem:notmoved}, that neither $\pi(b_i+\eps)$ nor $b_i+\eps$ have moved before iteration $i$ of $\bij$, so it contradicts that $a_{i+1}>\pim(b_i+\eps)$.
Similarly, $Q(b_i,b_i-\eps)$ does not hold either, so no switches take place in iteration $i$ of $\inv$.
In the rest of this section we will assume that $k>0$.

Denote by $\sigm_0$ the permutation $\sigm$ right before the $i$-th iteration of $\bij$ starts.
For each $1\le j\le k$, denote by $\sigm_j$ the permutation $\sigm$ right after subiteration $j$. For $j<k$,
this is right before the rightmost entry $b_i+j\eps$ of $\Gamma_i$ is switched with $b_i+(j{+}1)\eps$.
Note that $\sigm_k$ is the permutation $\sigm$ at the end of the $i$-th iteration.
For $0\le j\le k$, let $a_i^j$ denote the leftmost entry of $\Gamma_i$ in $\sigm_j$, and let
$\pim_j\in\C_{n+1}$ be the permutation whose cycle notation is obtained by removing
all but the first and last parentheses in the cycle form of $\sigm_j$ and appending $n+1$.
Note that by Lemma~\ref{lem:notmoved}(\ref{part1}), $a_i^0=a_i$. For $0\le j\le k$, we have that $\sigm_j(b_i+j\eps)=a_i^j$, because
$a_i^j$ and $b_i+j\eps$ are the leftmost and rightmost entry of $\Gamma_i$ in $\sigm_j$, respectively.

We will show that if $\pim=\pim_k$ right before the $i$-th iteration of $\inv$, then $\pi=\pim_0$ right after the $i$-th iteration,
so the $i$-th iteration of $\inv$ undoes the switches performed by the $i$-th iteration of $\bij$.

Using the same notation as in the description of $\bij$, we have $$\pi=(a_1,\dots,b_1,a_2,\dots,b_2,\dots,a_r,\dots,b_r,n+1).$$
The $i$-th cycle of $\sigm_0$ is $\Gamma_i=(a_i,\dots,b_i)$, because by Lemma~\ref{lem:notmoved}(\ref{part1}), $a_i$ and $b_i$ have not been moved before iteration $i$ of $\bij$.
For $1\le j\le k$, let $s_j=\sigm_{j-1}(b_i+j\eps)$. If $b_i+(k{+}1)\eps\notin\{0,n+1\}$, let $s_{k+1}=\sigm_k(b_i+(k{+}1)\eps)$, and if $b_i-\eps\notin\{0,n+1\}$, let $s_0=\sigm_0(b_i-\eps)$.

In Lemmas~\ref{obsa} through~\ref{obsd} below, unless otherwise stated, $j$ is an arbitrary integer with $1\le j\le k$.
Using the terminology from the above paragraphs,
subiteration $j$
starts with $\sigm_{j-1}$ and
switches a segment of adjacent entries of $\Gamma_i$, including its rightmost entry $b_i+(j{-}1)\eps$,
with the entries in the cluster $R_j$, the rightmost one being $b_i+j\eps$. Entries that are switched with each other differ by $\pm1$. Here is a schematic representation of subiteration $j$, with the underbrackets
indicating the entries that are switched:
$$\begin{array}{rccccc} \sigm_{j-1}=& \dots &\overbrace{(a_i^{j-1},\dots,\underbrace{x,\dots, b_i+(j{-}1)\eps})}^{\Gamma_i}& \dots &(\dots,\underbrace{x{\pm}1,\dots, b_i+j\eps}_{R_j}, s_j, \dots)& \dots \\
\sigm_{j}=& \dots &(a_i^{j},\dots,\underbrace{x{\pm}1,\dots, b_i+j\eps})& \dots  &(\dots,\underbrace{x,\dots, b_i+(j{-}1)\eps}_{R_j}, s_j, \dots)& \dots
\end{array}$$
Note that $a_i^{j-1}=a_i^j$ unless all the entries of $\Gamma_i$ are involved in the switch.

\begin{lemma}\label{obsa}
We have
$$\pi(b_i+(j{-}1)\eps)>\pi(b_i+j\eps)\ \textrm{ and }\ a_i^{j-1}<s_j.$$
\end{lemma}

\begin{proof}
For the switch between $b_i+(j{-}1)\eps$ and $b_i+j\eps$ to take place in subiteration $j$, $P(b_i+(j{-}1)\eps,b_i+j\eps)$ must hold, that is,
$\pi(b_i+(j{-}1)\eps)>\pi(b_i+j\eps)$ and $\sigm_{j-1}(b_i+(j{-}1)\eps)<\sigm_{j-1}(b_i+j\eps)$. The second inequality can be written as $a_i^{j-1}<s_j$.
\end{proof}

\begin{lemma}\label{lem:smaller}
Every entry in $\sigm_{j-1}$ and $\sigm_j$ that is inside (but not in the leftmost position of) the cluster $R_j$ is smaller than $a_i^{j-1}$.
The leftmost entry of $R_j$ is at most $a_i^{j-1}+1$ in $\sigm_{j-1}$ and at most $a_i^{j-1}$ in $\sigm_j$.
\end{lemma}

\begin{proof}
By Lemma~\ref{lem:relative}(\ref{partt2}), $a_i^{j-1}$ is the largest entry of $\Gamma_i$ in $\sigm_{j-1}$. In $\sigm_j$, the cluster $R_j$ contains entries that were
in $\Gamma_i$ in $\sigm_{j-1}$, so the statements about $\sigm_j$ follow.

To prove the statements about $\sigm_{j-1}$, note that at the start of subiteration $j$,
the value of each entry of $R_j$ must equal the value of the entry of $\Gamma_i$ that it is switched with, plus or minus one.
\end{proof}

\begin{lemma}\label{lem:clusters}
In the $i$-th iteration of $\bij$, the following statements hold:
\ben
\item\label{pd} The leftmost and rightmost positions of each $\Gamma_\ell$ with $\ell>i$ do not belong to any cluster. In particular, $b_i+j\eps$ is followed by $s_j$ in $\sigm_{j-1}$.
\item\label{pa} The clusters $R_1,\dots,R_k$ do not overlap.
\item\label{pb} Any position in the cycles $\Gamma_\ell$ with $\ell>i$ is involved in at most one switch.
\item\label{pc} Each $b_i+j\eps$ with $1\le j\le k-1$ is moved exactly twice: first, in subiteration $j$, from the rightmost position of $R_j$ to the rightmost position of $\Gamma_i$, and then,
in subiteration $j+1$, from there to the rightmost position of $R_{j+1}$.\\ 
The entries $b_i$ and $b_i+k\eps$ are moved only once: $b_i$ is moved in subiteration $1$ from the rightmost position of $\Gamma_i$ to the rightmost position of $R_1$;
$b_i+k\eps$ is moved in subiteration $k$ from the rightmost position of $R_k$ to the rightmost position of~$\Gamma_i$.
\item\label{pe} For each fixed position in $\Gamma_i$, the values of the entries that occupy that position may go up or down during iteration $i$ of $\bij$, but they cannot do both.
\een
\end{lemma}

\begin{proof}
By Lemma~\ref{lem:notmoved}(\ref{part1}), none of the $a_\ell,b_\ell$ with $\ell>i$ are moved while adjusting $\Gamma_i$, so they do not belong to any cluster, proving part~(\ref{pd}).

For part~(\ref{pa}), suppose that some clusters overlap and, among those, let $R_p$ and $R_q$ be the ones whose rightmost positions are furthest to the right. Then $R_p$ and $R_q$ overlap,
and they cannot have the same rightmost position, because by Lemma~\ref{lem:bi}, an entry that is taken out of the rightmost position of $\Gamma_i$ during the $i$-th iteration is not put back in.
Assume that the rightmost position of $R_p$ is to the right of $R_q$.

If $p<q$, then the entry $s_q$ following $b_i+q\eps$ in $\sigm_{q-1}$ has moved there in subiteration $p$ (since it is inside $R_p$), and it is not switched again in iteration $i$, by the choice of $p$ and $q$.
The entry $s_q$ in $\sigm_p$ is not the leftmost entry of $R_p$, because $R_p$ and $R_q$ overlap, so
by Lemma~\ref{lem:smaller}, $s_q<a_i^{p-1}$. Since $s_q$ does not move again in iteration $i$ and the first entry of $\Gamma_i$ can only change by consecutive values, we also have $s_q<a_i^{q-1}$, contradicting Lemma~\ref{obsa}.

If $p>q$, then $s_q$ is not moved during the $i$-th iteration until subiteration $p$, by the choice of $p$ and $q$. Since
$s_q>a_i^{q-1}$ by Lemma~\ref{obsa} the first entry of $\Gamma_i$ can only change by consecutive values,
it follows that $s_q>a_i^{p-1}$ as well. But this contradicts Lemma~\ref{lem:smaller}, because $s_q$ belongs to $R_p$ in $\sigm_{p-1}$.

Part~(\ref{pb}) is a trivial consequence of part~(\ref{pa}), since each cluster is involved in only one switch in iteration $i$.
 Part~(\ref{pc}) follows immediately from the previous parts and the definitions of $\bij$ and the clusters.

To prove part~(\ref{pe}), note that if the value of an entry goes up and then back down, then it must repeat a value, since switches move the values by $\pm1$. This would imply that some position to the right of $\Gamma_i$ is involved in more than one switch, contradicting part~(\ref{pb}).
\end{proof}

It follows from Lemma~\ref{lem:clusters}(\ref{pe}) that the values of the leftmost entry of $\Gamma_i$ during iteration $i$ of $\bij$ satisfy one of these conditions:
\bit \item $a_i^0=a_i^1=\dots=a_i^k$ (we say that iteration $i$ has {\em type H}),
\item $a_i^0\le a_i^1\le \dots\le a_i^k$ (we say it has {\em type U}, unless it has type H),
\item $a_i^0\ge a_i^1\ge \dots\ge a_i^k$ (we say it has {\em type D}, unless it has type H).
\eit

\begin{lemma}\label{obsd}
If the entry $s_j$ is moved in the $i$-th iteration of $\bij$, then it is switched with the leftmost entry of $\Gamma_i$.
\end{lemma}

\begin{proof}
Recall that $s_j$ is the first entry outside and to the right of $R_j$ in $\sigm_{j-1}$. By Lemma~\ref{obsa}, $a_i^{j-1}<s_j$. Assume that $s_j$ is moved in subiteration $p$ for some $p$.

By Lemma~\ref{lem:clusters}(\ref{pa}), the blocks $R_j$ and $R_p$ must be adjacent, with $R_j$ to the left of $R_p$. By Lemma~\ref{lem:clusters}(\ref{pb}),
the leftmost position of $R_p$, which is the one containing $s_j$, is only involved in the switch in subiteration $p$.

Consider first the case $j<p$. Since $a_i^{j-1}<s_j$ and the value of the first entry of $\Gamma_i$ can only change by one at a time,
we have that $a_i^{p-1}<s_j$. But by Lemma~\ref{lem:smaller} applied to $R_p$, noting that $s_j$ is the leftmost entry of $R_p$ in $\sigm_{p-1}$, we have
$s_j\le a_i^{p-1}+1$, from where $s_j=a_i^{p-1}+1$. Thus, $s_j$ is switched with the first entry of $\Gamma_i$, namely $a_i^{p-1}=s_j-1$, which becomes $a_i^p=s_j$ after the switch. Note that in this case iteration $i$ has type U.

Suppose now that $j>p$. In this case, $s_j$ is moved from $\Gamma_i$ to $R_p$ in subiteration $p$, and is not moved again in iteration $i$. Then we must have $a_i^{p}<s_j$, given that $a_i^{j-1}<s_j$, by Lemma~\ref{obsa},
and that the value of the first entry of $\Gamma_i$ can only change by one at a time. Again by Lemma~\ref{lem:smaller} applied to $R_p$, $s_j\le a_i^{p-1}$. Combining the inequalities $a_i^{p}<s_j\le a_i^{p-1}$, we see that
$s_j=a_i^{p-1}=a_i^p+1$, which means that $s_j$ came from the leftmost position in $\Gamma_i$ in subiteration $p$. Note that in this case iteration $i$ has type D.
\end{proof}

\begin{lemma}\label{obsf}
Let $0\le j,\ell\le k$. For $\ell\neq j$, we have $\pim_\ell(b_i+j\eps)=\sigm_\ell(b_i+j\eps)$.
Also, $\pim_j(b_i+j\eps)=a_{i+1}$ and $\sigm_j(b_i+j\eps)=a_i^j$.
\end{lemma}

\begin{proof}
By Lemma~\ref{lem:clusters}(\ref{pc}), $b_i+j\eps$ is in the rightmost position of $R_j$ in $\sigm_\ell$ when $\ell<j$, and in the rightmost position of $R_{j+1}$ when $\ell>j$.
By Lemma~\ref{lem:clusters}(\ref{pd}), an entry in a cluster is never the rightmost position of a cycle $\Gamma_\ell$. Thus, the entry following $b_i+j\eps$ in $\sigm_\ell$ when $\ell\neq j$ equals
both $\pim_\ell(b_i+j\eps)$ and $\sigm_\ell(b_i+j\eps)$.

To prove the second sentence, note that in $\sigm_j$, $b_i+j\eps$ is the rightmost entry of $\Gamma_i$, $a_i^j$ is the leftmost entry of $\Gamma_i$, and $a_{i+1}$ is the leftmost entry of $\Gamma_{i+1}$, by
Lemma~\ref{lem:notmoved}(\ref{part1}).
\end{proof}

In the next two lemmas we consider the case where iteration $i$ has type H. This case is simpler because $a_i$ is not moved, and therefore, by Lemma~\ref{obsd}, none of the $s_j$ with $1\le j\le k$ is moved during iteration $i$.

\begin{lemma}\label{lem:typeH1}
If the $i$-th iteration of $\bij$ has type H, then
\ben \item\label{orderai}
$a_{i+1}>s_1>s_2>\dots>s_k>a_i$, %\eeq
\item \label{s0} $s_0$ does not satisfy $a_{i+1}>s_0>s_1$,
\item \label{sk1} $s_{k+1}$ does not satisfy $s_k>s_{k+1}>a_i$.
\een
\end{lemma}

\begin{proof}
Together with Lemma~\ref{lem:clusters}(\ref{pc}), the above observation that the $s_j$ are not moved implies that $\sigm_\ell(b_i+j\eps)=s_j$ for $0\le\ell\le j-1$ and $\sigm_\ell(b_i+j\eps)=s_{j+1}$ for $j+1\le\ell\le k$.
Note also that $\sigm_j(b_i+j\eps)=a_i^j=a_i$ for $0\le j\le k$.
By Lemma~\ref{obsa}, $a_i=a_i^{j-1}<s_j$, and then by Lemma~\ref{lem:notmoved}(\ref{part2})(\ref{part3}), neither $b_i+j\eps$ nor $s_j$ for $1\le j\le k$
have been moved in the first $i-1$ iterations of $\bij$, so $\pi(b_i+j\eps)=s_j$. Part~(\ref{orderai}) now follows from Lemma~\ref{lem:bi} and the fact that $\pi(b_i)=a_{i+1}$.

To prove part~(\ref{s0}), assume that $s_0=\sigm_0(b_i-\eps)$ is defined and that $a_{i+1}>s_0>s_1$. In particular, $a_i<s_1<a_{i+1}$, so $s_0$ is not the first entry of a cycle and we have
\beq\label{eq:s0} a_i=\sigm_0(b_i)<\sigm_0(b_i-\eps)=s_0 \mbox{ and } a_{i+1}=\pi(b_i)>\pi(b_i-\eps)=s_0,\eeq
where the last equality follows again from Lemma~\ref{lem:notmoved}(\ref{part2})(\ref{part3}) using that $a_i<s_0$ and so neither $b_i-\eps$ nor $s_0$
have been moved in the first $i-1$ iterations of $\bij$.  Equation~(\ref{eq:s0}) implies that $P(b_i,b_i-\eps)$ would hold in this case, and since $s_0>s_1$,
the algorithm would have switched $b_i$ with $b_i-\eps$ instead of with $b_i+\eps$.

Similarly, to prove part~(\ref{sk1}), assume that $s_{k+1}=\sigm_k(b_i+(k{+}1)\eps)$ is defined and that $s_k>s_{k+1}>a_i$. We claim that in this case,
\beq\label{eq:sk1} a_i=\sigm_k(b_i+k\eps)<\sigm_k(b_i+(k{+}1)\eps)=s_{k+1} \mbox{ and } s_k=\pi(b_i+k\eps)>\pi(b_i+(k{+}1)\eps)=s_{k+1},\eeq
which implies that $P(b_i+k\eps,b_i+(k{+}1)\eps)$ would hold, and the algorithm would switch $b_i+k\eps$ with $b_i+(k{+}1)\eps$,
instead of ending iteration $i$ right after subiteration $k$. The only statement in
Equation~(\ref{eq:sk1}) that does not follow immediately from the definitions and the above observations is the last equality.
We first note that $b_i+(k{+}1)\eps$ is not the rightmost entry of a cycle in $\sigm_k$, otherwise
$s_{k+1}$ (which is $\sigm_k(b_i+(k{+}1)\eps)$ by definition) would be the first entry of the cycle and thus $s_{k+1}\ge a_{i+1}>s_k$.
To prove the last equality in Equation~(\ref{eq:sk1}) it is enough to show that neither $b_i+(k{+}1)\eps$ nor $s_{k+1}$ are moved in the first $i$ iterations of $\bij$.
We first use Lemma~\ref{lem:notmoved}(\ref{part2})(\ref{part3}) and the fact that $a_i<s_{k+1}$ to deduce that
neither $s_{k+1}$ nor the entry preceding it have been moved in the first $i-1$ iterations of $\bij$. Also, $s_{k+1}$ is not moved during iteration
$i$ because it is larger than the first entry $a_i$ of $\Gamma_i$. But then the entry preceding $s_{k+1}$ could only have moved during iteration $i$ if it was the rightmost entry of a cluster,
which is not the case because the entry preceding $s_{k+1}$ in $\sigm_k$ is $b_i+(k{+}1)\eps$.
\end{proof}

\begin{lemma}\label{lem:bijinvH}
Suppose that the $i$-th iteration of $\bij$ has type H.
Then iteration $i$ of $\inv$ undoes precisely the switches performed by iteration $i$ of $\bij$.
\end{lemma}

\begin{proof}
Suppose that $\pim=\pim_k$ right before the $i$-th iteration of $\inv$. The $i$-th block of $\pim_k$ is $c_i,\dots,d_i$,
where $c_i=a_i$ and $d_i=b_i+k\eps$. At this point, $Q(b_i+k\eps,b_i+(k{-}1)\eps)$ is the condition $$\pim_k(b_i+k\eps)>\pim_k(b_i+(k{-}1)\eps)\ \textrm{ and }\ \sigma(b_i+k\eps)<\sigma(b_i+(k{-}1)\eps).$$
The first inequality can be restated as $a_{i+1}>\sigm_k(b_i+(k{-}1)\eps)=s_k$ by Lemma~\ref{obsf}, and it holds by Lemma~\ref{lem:typeH1}(\ref{orderai}).
The second inequality is equivalent to $\pi(b_i+k\eps)<\pi(b_i+(k{-}1)\eps)$ by Proposition~\ref{prop:descents},
and it holds by Lemma~\ref{lem:bi}. Additionally, since $s_k>s_{k+1}>a_i$ does not hold by Lemma~\ref{lem:typeH1}(\ref{sk1}),
either $Q(b_i+k\eps,b_i+(k{+}1)\eps)$ does not hold or, if it does, then $s_k=\pim_k(b_i+(k{-}1)\eps)<\pim_k(b_i+(k{+}1)\eps)=s_{k+1}$. In either case, the $i$-th iteration of $\inv$ starts
 $i$ by switching $b_i+k\eps$ and $b_i+(k{-}1)\eps$ (as opposed to $b_i+(k{+}1)\eps$) in step I'.
Next, the switches in step II' of $\inv$ undo the switches from step II of subiteration $k$ of $\bij$, restoring cluster $R_k$.

Afterwards, for each $j=k{-}1,k{-}2,\dots,1$, the computation of $\inv$ checks condition $Q(b_i+j\eps,b_i+(j{-}1)\eps)$, that is, whether $$\pim_j(b_i+j\eps)>\pim_j(b_i+(j{-}1)\eps)\ \textrm{ and }\ \sigma(b_i+j\eps)<\sigma(b_i+(j{-}1)\eps).$$
Again, the first inequality can be restated as $a_{i+1}>\sigm_j(b_i+(j{-}1)\eps)=s_j$ by Lemma~\ref{obsf}, and it holds by Lemma~\ref{lem:typeH1}(\ref{orderai}). The second inequality is equivalent to $\pi(b_i+j\eps)<\pi(b_i+(j{-}1)\eps)$ by Proposition~\ref{prop:descents}, and it holds by Lemma~\ref{lem:bi}. Thus, $\inv$ performs the switch between $b_i+j\eps$ and $b_i+(j{-}1)\eps$ in step I', followed by the switches in step II' that undo the ones performed in subiteration $j$ of $\bij$, restoring cluster $R_j$.

Finally, $\inv$ checks condition $Q(b_i,b_i-\eps)$, that is, whether \beq\label{eq:pim0}\pim_0(b_i)>\pim_0(b_i-\eps)\ \textrm{ and }\ \sigma(b_i)<\sigma(b_i-\eps),\eeq assuming that $b_i-\eps\notin\{0,n+1\}$.
We have $\pim_0(b_i)=a_{i+1}$ by Lemma~\ref{obsf}, and $\pim_0(b_i-\eps)\ge\sigm_0(b_i-\eps)=s_0$ (with strict inequality if $b_i-\eps$ is at the end of a cycle).
The second inequality in~(\ref{eq:pim0})
is equivalent to $\sigm_1(b_i)<\sigm_1(b_i-\eps)$ because, as shown in the proof of Proposition~\ref{prop:descents},
$\sigm_1$ has a descent between positions $b_i$ and $b_i-\eps$ if and only if $\sigma$ (equivalently, $\pi$) does. Since $\sigm_1(b_i)=s_1$ and $\sigm_1(b_i-\eps)$ equals $\sigm_0(b_i-\eps)=s_0$, possibly plus or minus one (if $s_0$
was involved in the switch in subiteration $1$), this inequality implies that $s_1<s_0$.
So, if both inequalities~(\ref{eq:pim0}) held, then $a_{i+1}>s_0>s_1$, contradicting Lemma~\ref{lem:typeH1}(\ref{s0}). Thus, iteration $i$ of $\inv$ stops here.
\end{proof}

When the $i$-th iteration of $\bij$ has type U or D, the conclusion from Lemma~\ref{lem:bijinvH} still holds, but some of the arguments in the proof, including the statement of Lemma~\ref{lem:typeH1}, have to be slightly modified
to take into account the fact that some of the $s_j$ can be moved during iteration $i$. By Lemma~\ref{obsd}, $s_j$ can only be moved if it is switched with the leftmost entry of $\Gamma_i$. For that to happen, block $R_j$ must have a block $R_p$
immediately to its right.
The leftmost entry of $R_p$ then switches from $s_j$ to $s_j-1$ (resp. $s_j+1$) if $p>j$, or from $s_j+1$ (resp. $s_j-1$) to $s_j$ if $p<j$, assuming iteration $i$ has type U (resp. D).
Table~\ref{tab:sigmpim} illustrates the possible values of $\sigm_j(b_i+\ell\eps)$ and $\pim_j(b_i+\ell\eps)$ for $0\le j\le k$ and $-1\le \ell\le k+1$. The letter $\delta$ is used to indicate that
some entries may be modified by $\pm1$, with some abuse of notation, since the value of $\delta$ is not necessarily the same for the different entries of the table. Each $\delta\in\{0,1\}$ if iteration $i$ has type U, and
$\delta\in\{0,-1\}$ if it has type D.
In any case, the leftmost entry of block $p$ either stays fixed or moves up (in type D) or down (in type U)
by 1 only once throughout iteration $i$, when it is switched with leftmost entry of $\Gamma_i$.
The $>$ and $<$ signs between the entries indicate their relative order
(with no sign when it could go in either direction). The symbol~$^\bullet$ between entries indicates that the relative order disagrees with that of the corresponding entries (in the same positions) in $\pi$, or equivalently in $\sigma$, by Proposition~\ref{prop:descents}. The symbol~$^\circ$ means that the relative order agrees, and the symbol~$^\odot$ means that it could agree or disagree.
For all the entries in the table to be defined, one has to assume that $b_i-\eps,b_i+(k{+}1)\eps\in[n]$. The table displays the case $\pim_0(b_i-\eps)=s_0$, which holds
unless $b_i-\eps$ is the rightmost entry of a cycle, in which case $\pim_0(b_i-\eps)>s_0$; in both cases, $|\pim_j(b_i-\eps)-\pim_{j-1}(b_i-\eps)|\le 1$ for $1\le j\le k$.
Similarly, $\pim_k(b_i+(k{+}1)\eps)=s_{k+1}$, as shown in the table, unless $b_i+(k{+}1)\eps$ is the rightmost entry of a cycle, in which case $\pim_k(b_i+(k{+}1)\eps)>s_{k+1}$; in both cases, $|\pim_j(b_i+(k{+}1)\eps)-\pim_{j-1}(b_i+(k{+}1)\eps)|\le 1$ for $1\le j\le k$.

\begin{table}[htb]
$$\begin{array}{c|c@{}c@{}c@{\,}l@{}c@{\,}l@{}c@{\,}l@{}c@{\,}l@{\,}c@{\,}l@{}c@{}l@{}c@{}c@{}c@{}}
& b_i-\eps && b_i &&b_i+\eps &&b_i+2\eps &&b_i+3\eps && \dots &&b_i+(k{-}1)\eps &&b_i+k\eps &&b_i+(k{+}1)\eps \\ \hline
\sigm_0 & s_0 & ^{\odot} & a_i^0 & <^\bullet & s_1 &>^\circ& s_2{+}\delta &>^\circ& s_3{+}\delta &>^\circ& \dots &>^\circ&  s_{k-1}{+}\delta &>^\circ& s_k{+}\delta &^\circ&  \\
\sigm_1 & s_0{\pm}\delta & ^\circ & s_1 & >^\circ & a_i^1 &<^\bullet& s_2 &>^\circ& s_3{+}\delta &>^\circ& \dots &>^\circ&  s_{k-1}{+}\delta &>^\circ& s_k{+}\delta &^\circ&  \\
\sigm_2 &  & ^\circ & s_1{-}\delta & >^\circ & s_2 &>^\circ& a_i^2 &<^\bullet& s_3 &>^\circ& \dots &>^\circ&  s_{k-1}{+}\delta &>^\circ& s_k{+}\delta &^\circ&  \\
\vdots & &&\vdots&&\vdots&&\vdots&&\vdots&& \ddots &&\vdots&&\vdots&&\\
\sigm_{k-1} &  & ^\circ & s_1{-}\delta & >^\circ & s_2{-}\delta &>^\circ& s_3{-}\delta &>^\circ& s_4{-}\delta &>^\circ& \dots &>^\circ&  a_i^{k-1} &<^\bullet& s_k &^\circ& s_{k+1}{\pm}\delta \\
\sigm_{k} &  & ^\circ & s_1{-}\delta & >^\circ & s_2{-}\delta &>^\circ& s_3{-}\delta &>^\circ& s_4{-}\delta &>^\circ& \dots &>^\circ& s_k &>^\circ& a_i^{k} &^\circ & s_{k+1} \\ \hline
\pim_0 & s_0 & ^\circ & a_{i+1} & >^\circ & s_1 &>^\circ& s_2{+}\delta &>^\circ& s_3{+}\delta &>^\circ& \dots &>^\circ&  s_{k-1}{+}\delta &>^\circ& s_k{+}\delta &^\circ&  \\
\pim_1 & s_0{\pm}\delta & ^\circ & s_1 & <^\bullet & a_{i+1} &>^\circ& s_2 &>^\circ& s_3{+}\delta &>^\circ& \dots &>^\circ&  s_{k-1}{+}\delta &>^\circ& s_k{+}\delta &^\circ&  \\
\pim_2 &  & ^\circ & s_1{-}\delta & >^\circ & s_2 &<^\bullet& a_{i+1} &>^\circ& s_3 &>^\circ& \dots &>^\circ&  s_{k-1}{+}\delta &>^\circ& s_k{+}\delta &^\circ&  \\
\vdots & &&\vdots&&\vdots&&\vdots&&\vdots&& \ddots &&\vdots&&\vdots&&\\
\pim_{k-1} &  & ^\circ & s_1{-}\delta & >^\circ & s_2{-}\delta &>^\circ& s_3{-}\delta &>^\circ& s_4{-}\delta &>^\circ& \dots &<^\bullet&  a_{i+1} &>^\circ& s_k &^\circ& s_{k+1}{\pm}\delta \\
\pim_{k} &  & ^\circ & s_1{-}\delta & >^\circ & s_2{-}\delta &>^\circ& s_3{-}\delta &>^\circ& s_4{-}\delta &>^\circ& \dots &>^\circ& s_k &<^\bullet& a_{i+1} & ^{\odot} & s_{k+1}
\end{array}
$$
\caption{The values $\sigm_j(b_i+\ell\eps)$ and $\pim_j(b_i+\ell\eps)$ for $0\le j\le k$ and $-1\le \ell\le k+1$. The relative order of adjacent entries is indicated,
as well as whether it agrees or not with the relative order of the corresponding entries in $\sigma$ and $\pi$.\label{tab:sigmpim}}
\end{table}

The values and the signs $>,<$ in Table~\ref{tab:sigmpim} are a consequence of Lemmas~\ref{obsa}-\ref{obsf}.
The symbols~$^\bullet,^\circ,^\odot$ are justified in the following lemma. We first introduce some notation in order to state it.
For $\tau\in\S_n$, let $W(\tau)$ be the set of (unordered) pairs $\{t,t+1\}$ such that $t\in (D(\pi)\cap[n-1])\bigtriangleup D(\tau)$, that is, pairs of adjacent positions
where the descents of $\pi$ (or equivalently $\sigma$, by Proposition~\ref{prop:descents} and $\tau$ disagree.
Let $$W_R=\bigcup_{i<\ell\le r-1}\{\{b_\ell-\eps,b_\ell\},\{b_\ell,b_\ell+\eps\}\},$$
and let $$W_L=\bigcup_{1\le\ell<i}\{\{b_\ell+(k_\ell{-}1)\eps,b_\ell+k_\ell\eps\},\{b_\ell+k_\ell\eps,b_\ell+(k_\ell{+}1)\eps\}\},$$
where $k_\ell$ is the number of subiterations of iteration $\ell$ of $\bij$ (note that $k_i=k$ by definition).

\begin{lemma}\label{lem:relorder}
For $1\le j\le k-1$, $$W(\sigm_j)\subseteq\{\{b_i+j\eps,b_i+(j{+}1)\eps\}\}\cup W_R,$$
$$W(\pim_j)\subseteq\{\{b_i+(j{-}1)\eps,b_i+j\eps\}\}\cup W_L.$$
\begin{eqnarray*}
&&W(\sigm_0)\subseteq\{\{b_i-\eps,b_i\},\{b_i,b_i+\eps\}\}\cup W_R, \qquad W(\sigm_k)\subseteq W_R, \\
&&W(\pim_0)\subseteq W_L, \qquad W(\pim_k)\subseteq\{\{b_i+(k{-}1)\eps,b_i+k\eps\},\{b_i+k\eps,b_i+(k{+}1)\eps\}\}\cup W_L.
\end{eqnarray*}
\end{lemma}

\begin{proof} This argument essentially replicates the proof of Proposition~\ref{prop:descents}, using the notation introduced above and keeping track of $W(\pim_j)$ as well.
The proof is by induction on $i$, so let us forget for a moment that $i$ is fixed in this section.
When $i=0$, $W(\pim_0)=\emptyset$ because $\pim_0=\pi$, and $W(\sigm_0)\subseteq\bigcup_{1\le\ell\le r-1}\{\{b_\ell-\eps,b_\ell\},\{b_\ell,b_\ell+\eps\}\}$, as in the proof of Proposition~\ref{prop:descents}.
For $i>0$, the statements about $W(\sigm_0)$ and $W(\pim_0)$ follow by the induction hypothesis and the fact that $\sigm_0$ and $\pim_0$ in iteration $i$ equal $\sigm_{k_{i-1}}$ and $\pim_{k_{i-1}}$ in iteration $i-1$, respectively.

Going back to our fixed value of $i$, we will prove the statements about $W(\sigm_j)$ and $W(\pim_j)$ for $1\le j\le k$ by looking at the switches performed during the $i$-th iteration.
This iteration starts by checking $P(b_i,b_i-1)$ and $P(b_i,b_i+1)$ to determine whether these pairs are in $W(\sigm_0)$. If either of them is,
subiteration 1 switches a segment of entries ending in $b_i$ with a segment of entries ending in $b_i+\eps$ (namely those in $R_1$), with $\eps$ chosen so that $\sigm_0(b_i+\eps)$ is largest.
The switch guarantees that neither of the pairs $\{b_i,b_i-1\},\{b_i,b_i+1\}$ is in $W(\sigm_1)$,
but it adds the pair $\{b_i,b_i+\eps\}$ to $W(\pim_1)$. The switches performed in step II of subiteration 1 prevent other pairs from being added to $W(\sigm_1)$ or, with the possible exception
of $\{b_i+\eps,b_i+2\eps\}$ and $\{b_i-\eps,b_i\}$, to $W(\pim_1)$. We now show that these pairs are not added to $W(\pim_1)$.
For the pair $\{b_i+\eps,b_i+2\eps\}$, we have that $a_{i+1}=\pim_1(b_i+\eps)>\pim_1(b_i+2\eps)=s_2$, which are in the same relative order as $\pi(b_i+\eps)>\pi(b_i+2\eps)$, by Lemma~\ref{obsa},
so $\{b_i+\eps,b_i+2\eps\}\notin W(\pim_1)$.
For the pair $\{b_i-\eps,b_i\}$ (assuming $b_i-\eps\in[n]$), note that $\pim_0(b_i-\eps)=s_0$ unless $\pim_0(b_i-\eps)$ is the first entry of a cycle,
so it is never the case that $s_1<\pim_0(b_i-\eps)<a_{i+1}$, using Lemma~\ref{lem:typeUD}(\ref{s0UD}) below.
But since $\pim_0(b_i)=a_{i+1}$ and $\pim_1(b_i)=s_1$,
the relative order of $\pim_j(b_i-\eps)$ and $\pim_j(b_i)$ is the same for $j=0$ and $j=1$, so $\{b_i-\eps,b_i\}\notin W(\pim_1)$.

After subiteration $1$, the relative order of $\sigm_1(b_i+\eps)$ and $\sigm_1(b_i+2\eps)$ is different from the relative order of $\pi(b_i+\eps)$ and $\pi(b_i+2\eps)$, unless $k=1$.
The condition $P(b_i+\eps,b_i+2\eps)$ checks that this is the case, and then step I of subiteration $2$
switches $b_i+\eps$ and $b_i+2\eps$ to fix the problem, with step II preventing other pairs from being added to $W(\sigm_2)$. The switch between $b_i+\eps$ and $b_i+2\eps$ removes
$\{b_i,b_i+\eps\}$ from $W(\pim_2)$ but adds $\{b_i+\eps,b_i+2\eps\}$ to it. As before, the pair $\{b_i+2\eps,b_i+3\eps\}$ is not added to $W(\pim_2)$ because
$a_{i+1}=\pim_2(b_i+2\eps)>\pim_2(b_i+3\eps)=s_3$, which are in the same relative order as $\pi(b_i+2\eps)>\pi(b_i+3\eps)$. Step II again prevents any other pairs from being added to $W(\pim_2)$.

Subiterations from $3$ to $k$ proceed analogously. At the end of subiteration $k$, $P(b_i+k\eps,b_i+(k{+}1)\eps)$ is false, which means that either $b_i+(k{+}1)\eps\in\{0,n+1\}$
or the relative order of $\sigm_k(b_i+k\eps)$ and $\sigm_k(b_i+(k{+}1)\eps)$ agrees with the relative order of $\pi(b_i+k\eps)$ and $\pi(b_i+(k{+}1)\eps)$, so $\{b_i+k\eps,b_i+(k{+}1)\eps\}\notin W(\sigm_k)$.
On the other hand, subiteration $k$ has added $\{b_i+(k{-}1)\eps,b_i+k\eps\}$ to $W(\pim_k)$, and it is possible that $\{b_i+k\eps,b_i+(k{+}1)\eps\}\in W(\pim_k)$ as well.
\end{proof}

Now we can prove the analogue of Lemma~\ref{lem:typeH1} when iteration $i$ has arbitrary type. Note that even though it is no longer true in general that $s_j>a_i$, Lemma~\ref{obsa} guarantees that $s_j>a_i^{j-1}$
for $1\le j\le k$.

\begin{lemma}\label{lem:typeUD}
In iteration $i$ we have that
\ben \item \label{eq:orderaiUD}
$a_{i+1}>s_1>s_2>\dots>s_k$,
\item\label{s0UD} $s_0$ does not satisfy $a_{i+1}>s_0>s_1$,
\item\label{skUD} $s_{k+1}$ does not satisfy $s_k>s_{k+1}>a_i^k$.
\een
\end{lemma}

\begin{proof}
By Lemma~\ref{lem:relorder}, we have $\pim_0(b_i)>\pim_0(b_i+\eps)>\pim_0(b_i+2\eps)>\dots>\pim_0(b_i+k\eps)$. This is equivalent to $a_{i+1}>s_1>s_2+\delta>\dots>s_k+\delta$, as shown in Table~\ref{tab:sigmpim},
with each $\delta\in\{0,1\}$ if iteration $i$ has type U, $\delta\in\{0,-1\}$ if it has type D, and $\delta=0$ if it has type H. Since by definition the values $s_1,s_2,\dots,s_k$ are all different, part~(\ref{eq:orderaiUD}) follows.

Part~(\ref{s0UD}) is proved in the same way as part~(\ref{s0}) of Lemma~\ref{lem:typeH1}. Indeed, the argument was independent of the type of the $i$-th iteration, using that $\sigm_0(b_i)=a_i^0=a_i<s_1$.

For part~(\ref{skUD}), assume that $s_{k+1}=\sigm_k(b_i+(k{+}1)\eps)$ is defined and that $s_k>s_{k+1}>a_i^k$.
The condition $P(b_i+k\eps,b_i+(k{+}1)\eps)$ is that $$\pi(b_i+k\eps)>\pi(b_i+(k{+}1)\eps) \mbox{ and } \sigm_k(b_i+k\eps)<\pi(b_i+(k{+}1)\eps).$$
The second inequality is by definition equivalent to $a_i^k<s_{k+1}$; the first one, by Lemma~\ref{lem:relorder} and Table~\ref{tab:sigmpim}, is equivalent to $s_k=\pi_{k-1}(b_i+k\eps)>\pi_{k-1}(b_i+(k{+}1)\eps)=s_{k+1}\pm\delta$,
using the fact that $b_i+(k{+}1)\eps$ is not the rightmost entry of a cycle in $\pim_{k}$, otherwise $s_{k+1}\ge a_{i+1}>s_k$.
Since $s_k\neq s_{k+1}$, the condition $s_k>s_{k+1}>a_i^k$ implies that $P(b_i+k\eps,b_i+(k{+}1)\eps)$ holds, but then the algorithm would have switched $b_i+k\eps$ with $b_i+(k{+}1)\eps$
instead of ending iteration $i$ right after subiteration~$k$.
\end{proof}

We can now conclude the proof of the following statement.

\begin{prop}\label{prop:inverses}
The maps $\bij$ and $\inv$ are inverses of each other.
\end{prop}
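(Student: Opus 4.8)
The plan is to show that the maps $\bij$ and $\inv$ undo each other step by step, rather than trying to write down a closed formula for either. The two algorithms are manifestly designed to be mirror images: $\bij$ passes through the cycles $\Gamma_1,\dots,\Gamma_{r-1}$ from left to right, at each stage comparing $\pi$ with the current permutation $\sigm$, performing switches toward \emph{larger} values of $\sigm$ and stopping when the relative orders of $\pi$ and $\sigm$ agree; $\inv$ passes through the blocks $\Delta_{r-1},\dots,\Delta_1$ from right to left, comparing $\pi$ with the current permutation, performing switches toward \emph{smaller} values, stopping when the relative orders agree. So the natural strategy is to prove $\bij(\inv(\sigma))=\sigma$ for every $\sigma\in\S_n$; the other identity $\inv(\bij(\pi))=\pi$ then follows either by the same argument with the roles reversed, or simply because a left inverse of an injective map between finite sets of equal cardinality is a two-sided inverse (and injectivity of $\bij$, or surjectivity, is itself a consequence of $\inv\circ\bij=\mathrm{id}$).

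First I would record the structural facts that make the comparison possible. Starting from $\sigma\in\S_n$ written in canonical cycle form $(c_1,\dots,d_1)\cdots(c_r,\dots,d_r)$, the map $\inv$ produces $\pim$, and then fixes the blocks in decreasing order of index. I would prove block-by-block analogues of Lemmas~\ref{lem:decrpi}--\ref{lem:bi} for $\inv$: namely, that while fixing $\Delta_i$ no switch occurs between two entries of $\Delta_i$ (so $d_i$ stays the first — i.e.\ largest — element of $\Delta_i$, hence the comparison condition $Q$ can only fire between the rightmost entry of $\Delta_i$ and an entry outside it); that neither $c_j$ nor $d_j$ is moved for $j<i$, and no entry $t\le d_{i-1}$ nor any entry preceding such a $t$ is moved; that while fixing $\Delta_i$ no entry of a block $\Delta_j$ with $j>i$ is touched; and that the entries successively occupying the last position of $\Delta_i$ form a run $d_i,d_i-\eps,\dots$ along which $\pim$ is increasing and $\pi$ — the eventual output — is decreasing. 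These are the exact transcriptions of the five lemmas, with $>$ and $<$, "largest" and "smallest", and the left-to-right order of cycles all reversed; their proofs are the same arguments verbatim. The examples in the text already exhibit this symmetry concretely.

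The core of the argument is then a synchronization claim: when $\inv$ fixes block $\Delta_i$, the switches it performs are exactly the ones that $\bij$ would perform when fixing $\Gamma_i$, \emph{but in the opposite order}. More precisely, let $\pi=\inv(\sigma)$, and let $\tau_0=\sigm(\pi)$ be the permutation of equation~(\ref{eq:sigm}) built from $\pi$; the left-to-right maxima of the cycle word of $\pi$ are precisely the block-openers $c_1<c_2<\dots<c_r=n$ together with $n+1$, because $\inv$ never moves any $c_j$ or $d_j$ (by the transcribed Lemma~\ref{lem:notmoved}(\ref{part1})) and because within each block $d_j$ is the leftmost and largest entry. Hence $\sigm(\pi)$ has exactly the cycles $\Gamma_j=(c_j,\dots,d_j)$, i.e.\ $\sigm(\pi)=\sigma$ as a \emph{set} of cycle-words — the same starting point from which $\inv$ departed. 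Now I would argue that the sequence of configurations produced by $\bij$ while fixing $\Gamma_i$ is the reversal of the sequence produced by $\inv$ while fixing $\Delta_i$: at the end of $\inv$'s work on $\Delta_i$ the last entry has traveled from $d_i$ out to some $d_i-k\eps$, with the $\pi$-values decreasing along the run and the current-permutation values increasing; feeding this configuration to $\bij$, the condition $P$ checked at the rightmost entry $d_i-k\eps$ of $\Gamma_i$ is precisely the negation (at the far end) that made $\inv$ stop, reversed — so $\bij$ walks the last entry back $d_i-k\eps, d_i-(k-1)\eps,\dots,d_i$, at each stage also undoing the companion switches of the preceding entries (step~II), and stops exactly when it reaches $d_i$ and $P(d_i,d_i\pm1)$ becomes false — which it must, since before $\inv$ touched $\Delta_i$ the relative orders of $\pi$ and the current permutation agreed at $d_i$ by construction. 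One checks that $\bij$'s tie-breaking rule ("$\sigm(z+\eps)$ largest") picks the same direction $\eps$ that $\inv$'s rule ("$\pim(z+\eps)$ smallest") used, because the two current-permutation values straddling $\pi(d_i)$ are in swapped positions. Crucially, the transcribed non-interference lemmas guarantee that fixing $\Gamma_i$ does not disturb $\Gamma_j$ for $j\ne i$, so the cycles can be treated independently and the order in which $\bij$ processes them ($1,2,\dots,r-1$) versus $\inv$ ($r-1,\dots,1$) is irrelevant; after all blocks, $\bij(\pi)=\sigma$.

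I expect the main obstacle to be the bookkeeping in step~III of this synchronization, namely verifying that the \emph{companion} switches on the preceding entries $x,y$ also match up under reversal. When $\inv$ switches the rightmost entries and then is forced to switch $x,y$ with $|x-y|=1$, and possibly their predecessors in turn, it modifies the current permutation at those positions; I must check that $\bij$, walking backwards, encounters exactly the same forced cascade and reverses each of its steps, with the "last switch did not involve the leftmost entry" guard behaving consistently on both sides. The cleanest way to handle this is to phrase a single invariant — something like "at every moment, the restriction of the current permutation to the positions that lie in $\Gamma_i$-and-its-current-last-entry, together with the one external position currently holding a former entry of $\Gamma_i$, records a descent pattern interpolating between that of $\pi$ and that of $\sigma$" — and to observe that $\bij$ and $\inv$ each move this interpolation one notch, in opposite directions, with the forced predecessor-switches being the unique way to keep the rest of the descent set unchanged (this is exactly the content of Proposition~\ref{prop:descents}'s proof, read in reverse). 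Once that invariant is in place the proof is a finite induction on $i$ with an inner induction on $k$, and the equality $\bij(\inv(\sigma))=\sigma$ drops out; since $|\C_{n+1}|=n!=|\S_n|$, this single identity suffices to conclude that $\bij$ and $\inv$ are mutually inverse bijections.
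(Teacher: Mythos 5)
Your high-level plan — prove one of the two composite identities by synchronizing the iterations and then conclude via $|\C_{n+1}|=|\S_n|$ — is exactly the paper's; the paper simply chooses the opposite composition, proving $\inv\circ\bij=\mathrm{id}$ rather than $\bij\circ\inv=\mathrm{id}$. That choice is more economical: the paper already has Lemmas~\ref{lem:decrpi}--\ref{lem:bi} and Proposition~\ref{prop:descents} for $\bij$, and the verification of the $Q$-conditions in $\inv$'s iteration $i$ is carried out directly against the recorded evolution $\sigm_0,\dots,\sigm_k$ of $\bij$'s iteration $i$, with no need for a parallel collection of lemmas about $\inv$. Your route would first have to restate and reprove all five lemmas for $\inv$; your confidence that this is ``the same arguments verbatim'' is optimistic, since the mirror is not a clean left-right reflection (for instance the never-processed cycle is $\Gamma_r$ for $\bij$ and $\Delta_r$ for $\inv$, and in $\inv$ the already-fixed blocks lie to the \emph{right} of $\Delta_i$, so the analogues of Lemma~\ref{lem:notmoved} and Lemma~\ref{lem:notback} swap roles in a way that needs to be tracked explicitly).

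There is also a genuine error in the setup of your synchronization. You claim that the left-to-right maxima of the cycle word of $\pi=\inv(\sigma)$ are exactly $c_1<\dots<c_r$ together with $n+1$, ``because $\inv$ never moves any $c_j$ or $d_j$,'' and conclude that $\wt\sigma(\pi)=\sigma$. This is false, and the paper's own example shows it: for $\sigma=(1)(4)(17,6,10)(19,\dots)$ one has $\pi=\inv(\sigma)=(2,9,17,6,11,19,\dots,20)$, whose left-to-right maxima are $2,9,17,19,20$, not $1,4,17,19,20$. Indeed the transcribed Lemma~\ref{lem:notmoved}(\ref{part1}) only protects the first and last entry of $\Delta_j$ during iterations $i\neq j$; during iteration $j$ itself the last entry $d_j$ travels, and the first entry $c_j$ can travel too (as in this example with $c_1=1\mapsto2$, $c_2=4\mapsto9$). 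What is true — and what you actually need — is the weaker statement that the left-to-right maxima occur at the same \emph{positions} as the $c_j$'s, so that $\bij$'s parenthesization of $\pi$ reproduces $\sigma$'s block structure even though the values at those positions may have changed. This requires its own argument (it is the $\inv$-analogue of the paper's observation that the $i$-th cycle of $\sigm_0$ is $(a_i,\dots,b_i)$), and once it is in place you must also handle, just as the paper does for $a_i$, the general case in which $c_i$ moves during iteration $i$, rather than only the simple case your sketch addresses.
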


\begin{proof}
We have shown in Lemma~\ref{lem:bijinvH} that if the $i$-th iteration of $\bij$ has type H, then the $i$-th iteration of $\inv$ reverses the switches done by the $i$-th iteration of $\bij$.
It is enough to show that this is true as well when the $i$-th iteration of $\bij$ has type U or D.

We first show that for $1\le j\le k$, the fact that $P(b_i+(j{-}1)\eps,b_i+j\eps)$ holds on $\sigm_{j-1}$ implies that $Q(b_i+j\eps,b_i+(j-1)\eps)$ holds on $\pim_j$.
Indeed, $P(b_i+(j{-}1)\eps,b_i+j\eps)$ is the condition \beq\label{eq:Pbj}\pi(b_i+(j{-}1)\eps)>\pi(b_i+j\eps) \mbox{ and } \sigm_{j-1}(b_i+(j{-}1)\eps)<\sigm_{j-1}(b_i+j\eps).\eeq
By Lemma~\ref{lem:relorder}, the relative order of $\pi(b_i+(j{-}1)\eps)$ and $\pi(b_i+j\eps)$ is the same as that of $\pim_{j-1}(b_i+(j{-}1)\eps)$ and $\pim_{j-1}(b_i+j\eps)$, so
the first inequality in~(\ref{eq:Pbj}) is equivalent to $\pim_{j-1}(b_i+(j{-}1)\eps)>\pim_{j-1}(b_i+j\eps)$. But $\pim_{j-1}(b_i+(j{-}1)\eps)=a_{i+1}=\pim_j(b_i+j\eps)$, and $\pim_{j-1}(b_i+j\eps)=s_j=\pim_j(b_i+(j{-}1)\eps)$,
so we can write the inequality as $\pim_j(b_i+j\eps)>\pim_j(b_i+(j{-}1)\eps)$.
On the other hand, we have $\sigm_{j-1}(b_i+(j{-}1)\eps)=a_i^{j-1}=\sigm_{j}(b_i+j\eps)\pm1$ and $\sigm_{j-1}(b_i+j\eps)=s_j=\sigm_{j}(b_i+(j{-}1)\eps)$, so the second inequality in~(\ref{eq:Pbj})
implies that $\sigm_{j}(b_i+j\eps)<\sigm_{j}(b_i+(j{-}1)\eps)$, since these two quantities are never equal. This inequality is in turn equivalent to $\sigma(b_i+j\eps)<\sigma(b_i+(j{-}1)\eps)$
by Lemma~\ref{lem:relorder}. Thus, we have shown that the inequalities in~(\ref{eq:Pbj}) imply that
$$\pim_j(b_i+j\eps)>\pim_j(b_i+(j{-}1)\eps) \mbox{ and } \sigma(b_i+j\eps)<\sigma(b_i+(j{-}1)\eps),$$
which is by definition the condition $Q(b_i+j\eps,b_i+(j{-}1)\eps)$ on $\pim_j$. Alternatively, we could have argued that $Q(b_i+j\eps,b_i+(j{-}1)\eps)$ is equivalent to $a_{i+1}>s_j>a_i^{j-1}$, which holds by Lemmas~\ref{lem:typeUD}(\ref{eq:orderaiUD}) and~\ref{obsa}.

As in the proof of Lemma~\ref{lem:bijinvH}, suppose that $\pim=\pim_k$ right before the $i$-th iteration of~$\inv$. The $i$-th block of $\pim_k$ is $c_i,\dots,d_i$,
where $c_i=a_i^k$ and $d_i=b_i+k\eps$.
The $i$-th iteration of $\inv$ starts by checking whether $Q(b_i+k\eps,b_i+(k{-}1)\eps)$ and $Q(b_i+k\eps,b_i+(k{+}1)\eps)$ hold. We have seen in the above paragraph that $Q(b_i+k\eps,b_i+(k{-}1)\eps)$ holds.
The only situation that would prevent $b_i+k\eps$ from being switched with $b_i+(k{-}1)\eps$ by $\inv$ would be if $Q(b_i+k\eps,b_i+(k{+}1)\eps)$ held and $\pim_k(b_i+(k{+}1)\eps)<\pim_k(b_i+(k{-}1)\eps)$.
Let us assume for contradiction that this is the case. Then, by Lemma~\ref{obsf}, $\pim_k(b_i+(k{-}1)\eps)=\sigm_k(b_i+(k{-}1)\eps)=s_k$, and by definition, $\pim_k(b_i+(k{+}1)\eps)\ge \sigm_k(b_i+(k{+}1)\eps)=s_{k+1}$,
so $s_{k+1}<s_k$ in this case.
Besides, for $Q(b_i+k\eps,b_i+(k{+}1)\eps)$ to hold, $\sigma(b_i+k\eps)<\sigma(b_i+(k{+}1)\eps)$, which by Lemma~\ref{lem:relorder} is equivalent to $a_i^k=\sigm_k(b_i+k\eps)<\sigm_k(b_i+(k{+}1)\eps)=s_{k+1}$. Putting these two statements
statements together, $a_i^k<s_{k+1}<s_k$, contradicting Lemma~\ref{lem:typeUD}(\ref{skUD}).
Consequently, the $i$-th iteration of $\inv$ starts by switching $b_i+k\eps$ and $b_i+(k{-}1)\eps$ and possibly some entries preceding them, undoing the switches preformed by subiteration $k$ of $\bij$ and obtaining $\pim_{k-1}$.

Next, for $j=k{-}1,k{-}2,\dots,1$, the condition $Q(b_i+j\eps,b_i+(j{-}1)\eps)$ holds in $\pim_j$ as shown above, so $\inv$ switches $b_i+j\eps$ and $b_i+(j{-}1)\eps$ along with the necessary entries preceding them, undoing
the switches preformed by subiteration $j$ of $\bij$ and recovering $\pim_{j-1}$.

When $\pim_0$ is reached, $\inv$ checks condition $Q(b_i,b_i-\eps)$, that is, whether \beq\label{eq:pim0UD}\pim_0(b_i)>\pim_0(b_i-\eps)\ \textrm{ and }\ \sigma(b_i)<\sigma(b_i-\eps),\eeq assuming that $b_i-\eps\notin\{0,n+1\}$.
As in the proof of Lemma~\ref{lem:bijinvH}, $\pim_0(b_i)=a_{i+1}$ and $\pim_0(b_i-\eps)\ge\sigm_0(b_i-\eps)=s_0$.
By Lemma~\ref{lem:relorder}, the second inequality in~(\ref{eq:pim0UD})
is equivalent to $s_1=\sigm_1(b_i)<\sigm_1(b_i-\eps)=s_0\pm\delta$, which implies that $s_1<s_0$ since by definition $s_1\neq s_0$.
But if both inequalities~(\ref{eq:pim0UD}) held, then $a_{i+1}>s_0>s_1$, contradicting Lemma~\ref{lem:typeUD}(\ref{s0UD}). Thus, iteration $i$ of $\inv$ stops here.

We have shown that for each $i=r{-}1,r{-}2,\dots,1$, the $i$-th iteration of $\inv$ undoes the switches performed by the $i$-th iteration of $\bij$.
This proves that $\inv(\bij(\pi))=\pi$ for all $\pi\in\C_{n+1}$, and since $|\C_{n+1}|=|\S_n|=n!$, it follows that $\inv=\bij^{-1}$.
\end{proof}

Propositions~\ref{prop:descents} and~\ref{prop:inverses} together complete the proof of Theorem~\ref{thm:bij}.

\section{Consequences}\label{sec:consequences}

The following is an obvious consequence of Theorem~\ref{thm:bij}. We state it separately in order to refer to it later.

\begin{corollary}\label{cor:cycles} For every $n$ and every $I\subseteq[n-1]$,
$$|\{\pi\in\C_{n+1}\,:\,D(\pi)\cap[n-1]=I\}|=|\{\sigma\in\S_n\,:\,D(\sigma)=I\}|.$$
\end{corollary}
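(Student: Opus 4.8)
The plan is straightforward: Corollary~\ref{cor:cycles} follows immediately from Theorem~\ref{thm:bij}. The bijection $\bij:\C_{n+1}\to\S_n$ restricts, for any fixed $I\subseteq[n-1]$, to a map from $\{\pi\in\C_{n+1}:D(\pi)\cap[n-1]=I\}$ into $\{\sigma\in\S_n:D(\sigma)=I\}$, because $D(\bij(\pi))=D(\pi)\cap[n-1]$ by the theorem. Since $\bij$ is a bijection on the whole sets, this restriction is injective; and it is surjective because given $\sigma$ with $D(\sigma)=I$, its preimage $\pi=\bij^{-1}(\sigma)$ satisfies $D(\pi)\cap[n-1]=D(\sigma)=I$, so $\pi$ lies in the source set. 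Hence $\bij$ restricts to a bijection between the two sets, and they have the same cardinality.

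I would write this up in two or three sentences: fix $I\subseteq[n-1]$; invoke Theorem~\ref{thm:bij} to say $\bij$ maps $\{\pi\in\C_{n+1}:D(\pi)\cap[n-1]=I\}$ bijectively onto $\{\sigma\in\S_n:D(\sigma)=I\}$ (a bijection carries the preimage of a set bijectively onto that set, and here the relevant set is exactly the fiber of the descent-set statistic over $I$); conclude that the two sets are equinumerous. No new machinery is needed.

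There is essentially no obstacle here — the corollary is labelled "an obvious consequence" precisely because the content is entirely in Theorem~\ref{thm:bij}. The only thing to be slightly careful about is the bookkeeping that $D(\pi)\cap[n-1]=I$ really is the defining condition on the $\C_{n+1}$ side (as opposed to $D(\pi)=I$ with $\pi$ viewed in $\S_{n+1}$), which matches the statement of the theorem verbatim, so the match is exact.

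\begin{proof}
Fix $I\subseteq[n-1]$. By Theorem~\ref{thm:bij}, the bijection $\bij:\C_{n+1}\to\S_n$ satisfies $D(\bij(\pi))=D(\pi)\cap[n-1]$ for every $\pi\in\C_{n+1}$. Hence $\pi$ lies in $\{\pi\in\C_{n+1}:D(\pi)\cap[n-1]=I\}$ if and only if $\bij(\pi)$ lies in $\{\sigma\in\S_n:D(\sigma)=I\}$, so $\bij$ restricts to a bijection between these two sets. In particular they have the same cardinality.
\end{proof}
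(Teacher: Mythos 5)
Your proof is correct and matches the paper's treatment exactly: the paper states this corollary as "an obvious consequence of Theorem~\ref{thm:bij}" without further argument, and your two-line justification (the bijection $\bij$ preserves the statistic $D(\cdot)\cap[n-1]\mapsto D(\cdot)$, hence restricts to a bijection on each fiber) is precisely the intended reasoning.
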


This result has the following probabilistic interpretation. Choose a permutation $\pi\in\S_{n+1}$ uniformly at random. Then, for any given $I\subseteq[n-1]$, the event that $D(\pi)\cap[n-1]=I$
and the event that $\pi$ is a cyclic permutation are independent. To see this, note that the relative order of $\pi(1)\pi(2)\dots\pi(n)$ is given by a uniformly random permutation in $\S_n$. Thus,
for any fixed $I\subseteq[n-1]$, the probability that $D(\pi)\cap[n-1]=I$ for a random $\pi\in\S_{n+1}$ is the same as the probability that $D(\sigma)=I$ for a random $\sigma\in\S_n$, which by Corollary~\ref{cor:cycles}
is the same as the probability that $D(\pi)\cap[n-1]=I$ for a random $\pi\in\C_{n+1}$.

Our next goal is to show that Conjecture~\ref{conj:Eli} follows from Theorem~\ref{thm:bij}.
First, instead of the set $\T^0_n$, it will be more convenient for the sake of notation to consider the set $\U_n$
consisting of $n$-cycles in one-line notation in which one entry has been replaced with $n+1$. For example, $\U_3=\{431,241,234,412,342,314\}$.

\begin{corollary}\label{cor:biju} For every $n$ there is a bijection $\biju$ between $\U_{n}$ and $\S_n$ such that if $\tau\in\U_n$ and $\sigma=\biju(\tau)$, then
$$D(\tau)=D(\sigma).$$
Additionally, if $n+1$ is in position $k$ of $\tau$, then $\sigma(k)=n$.
\end{corollary}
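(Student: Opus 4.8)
The plan is to reduce the statement about $\U_n$ to Theorem~\ref{thm:bij} by means of an elementary relabeling bijection. The key point is that elements of $\U_n$ are, up to relabeling, very close to elements of $\C_{n+1}$: an element $\tau \in \U_n$ is an $n$-cycle with one entry replaced by $n+1$, so its one-line notation uses each of $1,\dots,n$ exactly once except that the value occupying one position (say position $k$) has been overwritten by $n+1$; the overwritten value is the unique element of $[n]$ not appearing among $\tau(1),\dots,\tau(n)$, and it can be recovered as $\tau^{-1}(n+1)$ once we know the cycle structure. First I would make the correspondence with $\C_{n+1}$ precise. Given $\pi \in \C_{n+1}$, its one-line notation $\pi(1)\pi(2)\dots\pi(n)\pi(n+1)$ records where $n+1$ sits, say $\pi^{-1}(n+1) = k \le n+1$; deleting the last entry $\pi(n+1)$ and keeping $\pi(1)\dots\pi(n)$ gives a word in which each of $1,\dots,n+1$ appears once except that the value $\pi(n+1)$ is missing and $n+1$ itself appears (in position $k$, provided $k \le n$). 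This is exactly the format of an element of $\U_n$ once one checks that the underlying permutation (with $n+1$ in position $k$ replaced by $\pi(n+1)$) is an $n$-cycle.

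The main step is therefore the following observation, which I would state and prove as the heart of the argument: \emph{the map sending $\pi \in \C_{n+1}$ with $\pi^{-1}(n+1) \le n$ to the word $\pi(1)\pi(2)\dots\pi(n) \in \U_n$ is a bijection between $\{\pi \in \C_{n+1} : \pi(n+1) \neq n+1\}$ — which is all of $\C_{n+1}$ since $\pi$ is an $(n+1)$-cycle and hence has no fixed point — and $\U_n$.} Surjectivity: given $\tau \in \U_n$ with $n+1$ in position $k$, let $m \in [n]$ be the missing value; form $\pi \in \S_{n+1}$ by $\pi(i) = \tau(i)$ for $i \ne k$, $\pi(k) = n+1$, $\pi(n+1) = m$. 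One must check $\pi$ is an $(n+1)$-cycle: the permutation $\tau$ viewed on $[n]$ is by hypothesis an $n$-cycle, and inserting $n+1$ into that cycle (which is what $\pi$ does: it routes $k \mapsto n+1 \mapsto m$ where previously $k \mapsto m$ in $\tau$) splices $n+1$ into the single cycle, keeping it a single cycle of length $n+1$. Injectivity and well-definedness are immediate from the recipe. Crucially, this map preserves descents: $D(\tau) = D(\pi(1)\dots\pi(n)) = D(\pi) \cap [n-1]$, since the descent set of the length-$n$ prefix only depends on the entries in positions $1,\dots,n$, and it records the position of $n+1$ in the expected way.

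Finally I would compose: define $\biju(\tau) = \bij(\pi)$, where $\pi \in \C_{n+1}$ is the preimage of $\tau$ under the above bijection. Since $\bij$ is a bijection $\C_{n+1} \to \S_n$ (Theorem~\ref{thm:bij}) and $\tau \mapsto \pi$ is a bijection $\U_n \to \C_{n+1}$, the composite $\biju$ is a bijection $\U_n \to \S_n$. For the descent property: $D(\biju(\tau)) = D(\bij(\pi)) = D(\pi) \cap [n-1] = D(\tau)$, using Theorem~\ref{thm:bij} for the middle equality and the descent-preservation of the relabeling for the last. For the position statement: if $n+1$ is in position $k$ of $\tau$, then $k = \pi^{-1}(n+1)$, and by Proposition~\ref{prop:descents}(\ref{n}) we have $\sigma^{-1}(n) = \pi^{-1}(n+1) = k$, i.e. $\sigma(k) = n$ where $\sigma = \biju(\tau)$. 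I expect the only mildly delicate point to be the verification that splicing $n+1$ into an $n$-cycle (and its inverse, deleting $n+1$ from an $(n+1)$-cycle) stays within single cycles of the correct length; everything else is bookkeeping that follows directly from the definitions and from the two cited results.
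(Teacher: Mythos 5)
Your proof is correct and follows essentially the same route as the paper: both proofs establish a descent-preserving bijection $\U_n \to \C_{n+1}$ (you construct the $(n+1)$-cycle by setting $\pi(n+1)=m$ in one-line notation, while the paper appends $n+1$ to the end of the cycle form written ending in $k$; these produce the same permutation) and then compose with $\bij$, citing Theorem~\ref{thm:bij} for the descent set and Proposition~\ref{prop:descents}(\ref{n}) for $\sigma(k)=n$. The only difference is presentational.
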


\begin{proof}
Let $\tau\in\U_n$ and suppose it has been obtained from an $n$-cycle $\pi$ by replacing $\pi(k)$ with $n+1$ in the one-line notation. Write $\pi$ in cycle form with $k$ at the end,
say $\pi=(t_1,t_2,\dots,t_{n-1},k)$, and let $\pi'=(t_1,t_2,\dots,t_{n-1},k,n+1)\in\C_{n+1}$. Clearly, $D(\tau)=D(\pi')\cap[n-1]$, and the map $\tau\mapsto\pi'$ is a bijection between $\U_n$ and $\C_{n+1}$.
Let $\sigma=\bij(\pi')$. By Theorem~\ref{thm:bij}, $D(\pi')\cap[n-1]=D(\sigma)$, and by Proposition~\ref{prop:properties}(\ref{n}), $\sigma(k)=n$.
\end{proof}

The following corollary proves Conjecture~\ref{conj:Eli}.
\begin{corollary}\label{cor:Elishift} For every $n$ there is a bijection $\biju'$ between $\T^0_{n}$ and $\S_n$ such that if $\tau\in\T^0_n$ and $\sigma=\biju'(\tau)$, then
$$D(\tau)=D(\sigma).$$
Additionally, if $0$ is in position $k$ of $\tau$, then $\sigma(k)=1$.
\end{corollary}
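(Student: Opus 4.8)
The plan is to deduce Corollary~\ref{cor:Elishift} directly from Corollary~\ref{cor:biju} by exhibiting an elementary, descent-preserving bijection between $\T^0_n$ and $\U_n$ that matches up the position of the distinguished entry. Observe that both sets are obtained from $n$-cycles by replacing one entry in one-line notation: in $\T^0_n$ the replaced entry becomes $0$, a value \emph{smaller} than everything else, while in $\U_n$ it becomes $n+1$, a value \emph{larger} than everything else. In either case the position of the special symbol is exactly the position of the replaced entry, and since the omitted value is determined by the other $n-1$ entries (it is the unique missing element of $[n]$), the map is well defined in both directions.

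The key observation is how this swap of "smallest" for "largest" affects the descent set. If $0$ sits in position $k$ of $\tau\in\T^0_n$, then (as noted in the introduction) $k-1\in D(\tau)$ when $k>1$ and $k\notin D(\tau)$; symmetrically, if $n+1$ sits in position $k$ of the corresponding $\tau'\in\U_n$, then $k-1\notin D(\tau')$ when $k>1$ and $k\in D(\tau')$ when $k<n$. So the raw map "replace $0$ by $n+1$ in the same position" changes the descent set only at indices $k-1$ and $k$, in a completely predictable way. To turn this into an honest descent-set bijection I would apply the reversal-type transformation already introduced in the paper: recall $\wt\pi$ defined by $\wt\pi(i)=n+1-\pi(n+1-i)$, under which $i\in D(\wt\pi)$ iff $n+1-i\notin D(\pi)$. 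Applying this "complement-and-reverse" operation to a word in $\T^0_n$ sends it to a word in $\U_n$ (the $0$ becomes $n+1$), reverses the position, and complements the descent set; thus $\biju'$ can be taken to be the composite $\tau\mapsto \wt\tau\mapsto \biju(\wt\tau)\mapsto\widetilde{\biju(\wt\tau)}$, i.e., conjugate Corollary~\ref{cor:biju}'s bijection $\biju$ by the reversal map on both sides. One checks that each of the three maps in this composite either preserves or complements the descent set, and complementation happens an even number of times (once going in, once coming out), so the composite preserves $D$ exactly; similarly, the position of $0$ is carried to position $n+1-k$ by the first reversal, then $\biju$ produces $\sigma'$ with $\sigma'(n+1-k)=n$, and the final reversal turns this into $\sigma$ with $\sigma(k)=n+1-n=1$, as required.

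The steps, in order: (1) define the bijection $\T^0_n\to\C_{n+1}$ analogous to the one in the proof of Corollary~\ref{cor:biju}, namely given $\tau\in\T^0_n$ coming from an $n$-cycle $\pi$ by replacing $\pi(k)$ with $0$, form $\pi'=(t_1,\dots,t_{n-1},k,n+1)$, or more slickly just verify the reversal map carries $\T^0_n$ bijectively to $\U_n$; (2) record the descent-set and distinguished-position behavior of the reversal map on these sets; (3) set $\biju'=\widetilde{(\cdot)}\circ\biju\circ\widetilde{(\cdot)}$ and verify it is a bijection $\T^0_n\to\S_n$; (4) track $D$ and the position of $0$ through the composite to confirm $D(\tau)=D(\sigma)$ and $\sigma(k)=1$.

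I expect the main obstacle to be purely bookkeeping: making sure the various complementations and reversals compose to the identity on descent sets rather than to complementation, and that the "position $k$ of $0$" statement survives the two reversals with the value landing on $1$ rather than on $n$. There is no new combinatorial content beyond Corollary~\ref{cor:biju}; the only care needed is to handle the boundary cases $k=1$ and $k=n$ (where only one of $k-1,k$ is a legal index) consistently, but since we are appealing to $\biju$ as a black box and the reversal map is an involution with a clean effect on $D$, these cases cause no real trouble.
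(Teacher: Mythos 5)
Your proposal is correct and matches the paper's proof essentially verbatim: both define $\biju'(\tau)=\wt{\biju(\wt\tau)}$, use the identity $i\in D(\wt\pi)\Leftrightarrow n+1-i\notin D(\pi)$ twice so the two complement-and-reversals cancel on descent sets, and track the distinguished position through the conjugation to conclude $\sigma(k)=1$. No differences of substance.
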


\begin{proof}
Given $\tau\in\T^0_n$ obtained from an $n$-cycle $\pi$ by replacing $\pi(k)$ with $0$ in its one-line notation, let $\wh\tau\in\U_n$ be obtained from $\wh\pi$ (see the definition in the introduction) by replacing $\wh\pi(n+1-k)$ with $n+1$.
It is clear that for $1\le i\le n-1$, $i\in D(\wh\tau)$ if and only if $n+1-i\notin D(\tau)$. Let $\sigma=\biju'(\tau)=\wh{\biju(\wh\tau)}$. Then, for $1\le i\le n-1$,
$$i\in D(\sigma) \ \Leftrightarrow\  n+1-i\notin D(\biju(\wh\tau))=D(\wh\tau) \ \Leftrightarrow\ i\in D(\tau).$$ Also $\wh\sigma(n+1-k)=n$, so $\sigma(k)=1$.
\end{proof}

The final result of this section can be seen as a generalization of Corollary~\ref{cor:cycles}. We give a bijective proof of it.

\begin{corollary}\label{cor:cyclesu} Fix $1\le m\le n$ and let $J=[n-1]\setminus\{m-1,m\}$. For every $I\subseteq J$,
$$|\{\pi\in\C_{n}\,:\,D(\pi)\cap J=I\}|=|\{\sigma\in\S_{n}\,:\,\sigma(m)=1,\,D(\sigma)\cap J=I\}|.$$
\end{corollary}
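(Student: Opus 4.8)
The plan is to deduce Corollary~\ref{cor:cyclesu} from Corollary~\ref{cor:biju} by a counting argument that ``collapses'' the extra constraint on $\sigma$. First I would observe that $\U_n$ is naturally partitioned according to the position $k$ of the entry $n+1$, and that under the bijection $\tau\mapsto\pi'$ used in the proof of Corollary~\ref{cor:biju}, the $n$-cycle $\pi$ from which $\tau$ is built lives in $\C_n$ and $k=\pi^{-1}$ applied at\dots more precisely $k$ is the position at the end of the cycle form, i.e.\ the unique index with a prescribed role. So fixing $m=k$ gives a bijection between $\{\tau\in\U_n:\ n+1\text{ in position }m\}$ and $\C_n$ itself: an $n$-cycle $\pi$ corresponds to $\tau$ obtained by writing $\pi$ in cycle form with $m$ at the end, $\pi=(t_1,\dots,t_{n-1},m)$, and replacing $\pi(m)=t_1$ by $n+1$ in one-line notation. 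Under this correspondence $D(\tau)=D(\pi)\setminus\{?\}$ needs to be pinned down; the key point is that if $n+1$ sits in position $m$ of $\tau$, then $m-1\in D(\tau)$ (when $m>1$) and $m\notin D(\tau)$, so the descent set of $\tau$ is completely free only on the index set $J=[n-1]\setminus\{m-1,m\}$, and on $J$ it records exactly $D(\pi)\cap J$, because changing one entry of $\pi$ to the maximum value $n+1$ cannot affect comparisons at indices not adjacent to position $m$.

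Next I would combine this with Corollary~\ref{cor:biju}. That corollary gives a bijection $\biju:\U_n\to\S_n$ with $D(\biju(\tau))=D(\tau)$, and with the extra feature that if $n+1$ is in position $k$ of $\tau$ then $\biju(\tau)(k)=n$. Restricting $\biju$ to the fiber ``$n+1$ in position $m$'' therefore yields a bijection onto $\{\sigma\in\S_n:\ \sigma(m)=n\}$ that preserves the descent set. Composing the two bijections, I get a descent-set-preserving bijection
\[
\C_n\ \longleftrightarrow\ \{\sigma\in\S_n:\ \sigma(m)=n\},
\]
where on the left we only see $D(\pi)\cap J$ (the other two indices being determined or irrelevant), and on the right $D(\sigma)$ is recorded in full but $\sigma(m)=n$ forces $m-1\in D(\sigma)$ (if $m>1$) and $m\notin D(\sigma)$, so again only $D(\sigma)\cap J$ carries information. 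Hence for each $I\subseteq J$,
\[
|\{\pi\in\C_n:\ D(\pi)\cap J=I\}|=|\{\sigma\in\S_n:\ \sigma(m)=n,\ D(\sigma)\cap J=I\}|.
\]

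Finally, to get the ``$1$'' in place of ``$n$'' in the statement, I would apply the involution $\sigma\mapsto\wt\sigma$ from the introduction, exactly as in the proof of Corollary~\ref{cor:Elishift}. Since $\wt\sigma(i)=n+1-\sigma(n+1-i)$, the condition $\sigma(m)=n$ is equivalent to $\wt\sigma(n+1-m)=1$, and $i\in D(\wt\sigma)\iff n+1-i\notin D(\sigma)$. One checks that the map $J\to J$, $i\mapsto n+1-i$, is a bijection (because $m-1,m$ are swapped up to the reflection sending $\{m-1,m\}$ to $\{n-m,n-m+1\}$, and the reflection of $\{1,\dots,n-1\}\setminus\{m-1,m\}$ is $\{1,\dots,n-1\}\setminus\{n-m,n-m+1\}$); combined with the same reflection applied on the $\C_n$ side, this turns the identity just proved into the claimed one, now with the index $m$ in the position of the new forced value and with $1$ in place of $n$. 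The main obstacle I anticipate is bookkeeping: carefully verifying that passing to the fiber ``$n+1$ (resp.\ $0$) in position $m$'' really does reduce the recorded data to $D(\cdot)\cap J$ on both sides, and getting the reflection bookkeeping right so that the two excluded indices $\{m-1,m\}$ match up correctly after applying $\sigma\mapsto\wt\sigma$; the bijections themselves are already in hand from Corollary~\ref{cor:biju} and the introduction.
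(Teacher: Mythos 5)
Your proposal is correct and follows essentially the same route as the paper. The paper's proof is to replace $\pi(m)$ with $0$ to get $\tau\in\T^0_n$ and invoke Corollary~\ref{cor:Elishift}, which yields $\sigma=\biju'(\tau)$ with $\sigma(m)=1$ and $D(\sigma)\cap J=D(\tau)\cap J=D(\pi)\cap J$; since $\biju'$ is by definition $\biju$ conjugated by the reverse-complement involution $\wt{(\cdot)}$, this is exactly your chain through $\U_n$, Corollary~\ref{cor:biju}, and $\wt{(\cdot)}$, just packaged in a single cited corollary. One bookkeeping slip in your final paragraph: $i\mapsto n+1-i$ is not a bijection $J\to J$ (it does not even preserve $[n-1]$); the descent-position reflection is $i\mapsto n-i$, and it carries $J=[n-1]\setminus\{m-1,m\}$ onto $J'=[n-1]\setminus\{n-m,\,n-m+1\}$ rather than onto $J$ itself, as your own parenthetical actually computes. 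The argument still closes because the identity you established for $\{\sigma:\sigma(m)=n,\ D(\sigma)\cap J=I\}$ holds for every $m$ and every $I\subseteq J$, so one applies it at $m'=n+1-m$ with $I'=\{n-i:i\in I\}$ and then reflects both sides by $\wt{(\cdot)}$.
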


\begin{proof}
Let $\pi\in\C_n$ with $D(\pi)\cap J=I$. Let $\tau\in\T^0_n$ be obtained by replacing $\pi(m)$ with $0$ in the one-line notation of $\pi$, and let $\sigma=\biju'(\tau)$. By Corollary~\ref{cor:Elishift}, $\sigma(m)=1$
and $D(\sigma)\cap J=D(\tau)\cap J=D(\pi)\cap J=I$.
\end{proof}

\section{Related work and non-bijective proofs}\label{sec:related}

In this section we introduce some related work of Gessel and Reutenauer~\cite{GR}, which will allow us to give
non-bijective proofs of Corollaries~\ref{cor:cycles} and~\ref{cor:cyclesu}.
We start with some definitions. Let $X=\{x_1,x_2,\dots\}_<$ be a linearly ordered alphabet.
A {\em necklace} of length $\ell$ is a circular arrangement of $\ell$ beads which are labeled with elements of $X$. Two necklaces are considered the same if they are cyclic rotations of one another
(note that we do not allow reflections).
The cycle structure of a multiset of necklaces is the partition whose parts are the lengths of the necklaces in the multiset.
The {\em evaluation} of a multiset of necklaces is the monomial $x_1^{e_1}x_2^{e_2}\dots$ where $e_i$ is the number of beads with label $x_i$.

The following result is equivalent to Corollary~2.2 from~\cite{GR}.

\begin{theorem}[\cite{GR}]\label{thm:GR}
Let $I=\{i_1,i_2,\dots,i_k\}_<\subseteq[n-1]$ and let $\lambda$ be a partition of~$n$. Then the number of permutations with cycle structure $\lambda$ and descent set contained in $I$ equals the number of multisets of necklaces with cycle structure $\lambda$ and evaluation $x_1^{i_1}x_2^{i_2-i_1}\dots x_k^{i_k-i_{k-1}}x_{k+1}^{n-i_k}$ .
\end{theorem}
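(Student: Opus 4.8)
The plan is to give a bijective proof of Theorem~\ref{thm:GR} by constructing an explicit correspondence between permutations of $[n]$ and multisets of necklaces on the alphabet $X$, keeping track of cycle structure and of a monomial statistic, and then specializing the correspondence so that the monomial statistic on the multiset side becomes the evaluation $x_1^{i_1}x_2^{i_2-i_1}\cdots x_{k+1}^{n-i_k}$ exactly when the descent set of the permutation is contained in $I=\{i_1,\dots,i_k\}_<$. Concretely, given a permutation $\pi\in\S_n$ with cycle structure $\lambda$, I would write each cycle of $\pi$ in its canonical form and replace every entry $j$ of $[n]$ by a letter $x_{f(j)}$, where $f:[n]\to\{1,2,\dots,k+1\}$ is the weakly increasing ``staircase'' labeling determined by $I$: $f(j)=1$ for $1\le j\le i_1$, $f(j)=2$ for $i_1<j\le i_2$, and so on. This turns each cycle into a circular word, i.e.\ a necklace, and the whole permutation into a multiset of necklaces with cycle structure $\lambda$; the evaluation of that multiset is $x_1^{i_1}x_2^{i_2-i_1}\cdots x_{k+1}^{n-i_k}$ by construction. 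The content of the theorem is that this map restricts to a bijection from $\{\pi:\text{cycle structure }\lambda,\ D(\pi)\subseteq I\}$ onto \emph{all} such multisets of necklaces.

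The key steps, in order, are: (1) recall the ``standardization'' or ``reading word'' construction of Gessel--Reutenauer that, given a multiset of necklaces with a given evaluation, recovers a unique word (and hence, after relabeling by the positions $1,\dots,n$ in an order-preserving way consistent with ties broken by necklace position) a unique permutation; (2) check that the descent set of the recovered permutation is contained in $I$ — this is where one uses that within a block of constant label $x_t$ the tie-breaking rule is chosen precisely so as not to create a descent at any position other than the block boundaries $i_1,\dots,i_k$, so the only possible descents lie in $I$; (3) verify that the two maps are mutually inverse, so that the correspondence is a bijection between the two sets claimed; and (4) observe that necklace cycle structure matches permutation cycle structure throughout, which is immediate since cycles map to necklaces of the same length. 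Step (2), establishing that $D(\pi)\subseteq I$ is equivalent to the evaluation being exactly the prescribed monomial, is the heart of the argument, and it hinges on the standard lemma that a word arising from a multiset of necklaces via the Gessel--Reutenauer bijection, when standardized, has descents only where the underlying letter strictly decreases.

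The main obstacle I expect is handling the tie-breaking carefully: when several beads carry the same letter $x_t$, there are many ways to linearly order the occurrences, and only the canonical one coming from the necklace/Lyndon-word machinery of~\cite{GR} yields a well-defined inverse. One must argue that this canonical choice (a) is forced by requiring $D(\pi)\subseteq I$ — i.e.\ within each constant-label block the permutation values must be increasing, pinning down which global value in $[n]$ each bead receives — and (b) is compatible with cyclic rotation of necklaces, so that the construction does not depend on where one ``cuts'' each cycle. Since this is exactly the combinatorial core of Corollary~2.2 of~\cite{GR}, the cleanest route is to cite that corollary directly and merely translate its statement (phrased there in terms of quasi-symmetric or fundamental symmetric function expansions) into the necklace-evaluation language above; the translation amounts to matching the monomial $x_1^{i_1}x_2^{i_2-i_1}\cdots x_{k+1}^{n-i_k}$ with the composition $(i_1,i_2-i_1,\dots,n-i_k)$ of $n$ associated to $I$, and recalling that ``descent set contained in $I$'' is precisely the condition selecting that monomial in the fundamental quasi-symmetric expansion indexed by $D(\pi)$.
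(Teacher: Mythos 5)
The paper offers no proof of this theorem beyond the remark that it is equivalent to Corollary~2.2 of~\cite{GR}, and your proposal likewise reduces to citing that corollary (after a translation between the quasi-symmetric and necklace-evaluation languages that you carry out correctly). Your accompanying sketch of the underlying Gessel--Reutenauer bijection --- replace entries of the cycle form by the staircase letters, invert by lexicographically ordering the periodic sequences read from each bead --- is the same machinery the paper invokes later in the proof of Proposition~\ref{prop:subsets}, so this is essentially the same approach.
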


We can now give direct, non-bijective proofs of Corollaries~\ref{cor:cycles} and~\ref{cor:cyclesu}.

\begin{proof}[Alternate proof of Corollary~\ref{cor:cycles}]
Suppose that $I=\{i_1,i_2,\dots,i_k\}_<$, and let $I'=I\cup\{n\}$.
By Theorem~\ref{thm:GR}, the number of permutations $\pi\in\C_{n+1}$ with $D(\pi)\subseteq I'$ (equivalently, $D(\pi)\cap[n-1]\subseteq I$) equals the number of necklaces with evaluation
$$x_1^{i_1}x_2^{i_2-i_1}\dots x_k^{i_k-i_{k-1}}x_{k+1}^{n-i_k}x_{k+2}.$$
By first choosing the bead labeled $x_{k+2}$, it is clear that the number of such necklaces is $$\binom{n}{i_1,i_2-i_1,\dots,i_k-i_{k-1},n-i_k}.$$
But this is precisely (see~\cite{EC1}) the number of permutations in $\S_n$ whose descent set is contained in $I$. Thus, we have shown that
$$|\{\pi\in\C_{n+1}\,:\,D(\pi)\cap[n-1]\subseteq I\}|=|\{\sigma\in\S_n\,:\,D(\sigma)\subseteq I\}|.$$
Since this holds for all $I\subseteq[n-1]$, the statement now follows by inclusion-exclusion:
\bmt |\{\pi\in\C_{n+1}\,:\,D(\pi)\cap[n-1]=I\}|=\sum_{J\subseteq I}(-1)^{|I|-|J|}|\{\pi\in\C_{n+1}\,:\,D(\pi)\cap[n-1]\subseteq J\}|\\
=\sum_{J\subseteq I}(-1)^{|I|-|J|}|\{\sigma\in\S_n\,:\,D(\sigma)\subseteq J\}|=|\{\sigma\in\S_n\,:\,D(\sigma)=I\}|.
\end{multline*}
\end{proof}

Note that even though a bijective proof of Theorem~\ref{thm:GR} is implicit in~\cite{GR}, the last inclusion-exclusion step in the above proof of Corollary~\ref{cor:cycles} makes it non-bijective.

\begin{proof}[Alternate proof of Corollary~\ref{cor:cyclesu}]
Let $I=\{i_1,i_2,\dots,i_k\}_<$. Assume first that $1<m<n$, and let $$I'=I\cup\{m-1,m\}=\{i_1,i_2,\dots,i_j,m-1,m,i_{j+1},\dots,i_k\}_<.$$
By Theorem~\ref{thm:GR}, the number of permutations $\pi\in\C_{n}$ with $D(\pi)\subseteq I'$ (equivalently, $D(\pi)\cap J\subseteq I$) equals the number of necklaces with evaluation
$$x_1^{i_1}x_2^{i_2-i_1}\dots x_j^{i_j-i_{j-1}}x_{j+1}^{m-1-i_j}x_{j+2}x_{j+3}^{i_{j+1}-m}\dots x_{k+2}^{i_k-i_{k-1}}x_{k+3}^{n-i_k}.$$
By first choosing the bead labeled $x_{j+2}$, it is clear that the number of such necklaces is
$$\binom{n-1}{i_1,i_2-i_1,\dots,i_j-i_{j-1},m-1-i_j,i_{j+1}-m,\dots,i_k-i_{k-1},n-i_k}.$$
But this is precisely the number of permutations $\sigma\in\S_n$ with $\sigma(m)=1$ whose descent set satisfies $D(\sigma)\cap J\subseteq I$.
Indeed, each partition of $\{2,3,\dots,n\}$ into blocks of sizes $$i_1,i_2-i_1,\dots,i_j-i_{j-1},m-1-i_j,i_{j+1}-m,\dots,i_k-i_{k-1},n-i_k$$
corresponds to the permutation whose first $i_1$ entries are the elements of the first block in increasing order, followed by the $i_2-i_1$ elements
of the second block in increasing order, until we get to the $m$-th entry, which is $1$, after which the $i_{j+1}-m$ elements of the $(j{+}1)$-st block follow in increasing order, and so on.
This proves that
$$|\{\pi\in\C_{n}\,:\,D(\pi)\cap J\subseteq I\}|=|\{\sigma\in\S_n\,:\,\sigma(m)=1,\,D(\sigma)\cap J\subseteq I\}|.$$
As before, since this equality holds for all $I\subseteq J$, the main statement now follows by inclusion-exclusion.

If $m=1$ or $m=n$, we let $I'=I\cup\{m\}=\{1,i_1,i_2,\dots,i_k\}$ or $I'=I\cup\{m-1\}=\{i_1,i_2,\dots,i_k,m-1\}$, respectively, and apply an analogous argument.
\end{proof}

\bs

We end this section with another application of the work of Gessel and Reutenauer. We show that \cite[Lemma 3.4]{GR} can be used to provide an explicit bijection
that solves a generalization of Problem~\ref{prob:EFW}. Indeed, since it preserves the cycle structure, the following bijection sends derangements to derangements.

\begin{prop}\label{prop:subsets} For any two subsets $I,J\subseteq[n-1]$ with the same associated partition, there exists a bijection between
$\{\pi\in\S_n\,:\,D(\pi)\subseteq I\}$ and $\{\sigma\in\S_n\,:\,D(\sigma)\subseteq J\}$ that preserves the cycle structure.
\end{prop}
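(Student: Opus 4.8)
The plan is to reduce the statement to an equality of multisets of necklaces, which is exactly the setting of Gessel--Reutenauer. Fix the common associated partition $\lambda$ of $I$ and $J$. By Theorem~\ref{thm:GR}, for each partition $\mu$ of $n$ the number of permutations in $\S_n$ with cycle structure $\mu$ and descent set contained in $I$ equals the number of multisets of necklaces with cycle structure $\mu$ and evaluation $x_1^{i_1}x_2^{i_2-i_1}\cdots x_{k+1}^{n-i_k}$ where $I=\{i_1,\dots,i_k\}_<$; the analogous statement holds for $J$. So it suffices to produce, for each $\mu$, a bijection between multisets of necklaces with cycle structure $\mu$ and evaluation given by the composition attached to $I$, and those with cycle structure $\mu$ and evaluation given by the composition attached to $J$. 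Since $I$ and $J$ have the same associated partition, the two compositions of $n$ differ only by a permutation of their parts; equivalently, the two evaluations are obtained from one another by permuting the multiplicities of the letters $x_1,x_2,\dots$.

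The key step is therefore: given that two evaluations $x_1^{e_1}x_2^{e_2}\cdots$ and $x_1^{f_1}x_2^{f_2}\cdots$ are related by a permutation of the exponent sequence, exhibit a bijection on multisets of necklaces (preserving cycle structure, i.e.\ preserving the underlying lengths) carrying one evaluation to the other. It is enough to handle the case where the two exponent sequences differ by a single adjacent transposition, say swapping the multiplicities of $x_t$ and $x_{t+1}$, and then compose such maps; this is precisely the content of \cite[Lemma~3.4]{GR}. That lemma provides an involution-type bijection on necklaces (and hence on multisets of necklaces, applied coordinatewise) that exchanges the number of beads labeled $x_t$ with the number labeled $x_{t+1}$ while fixing all other bead labels and fixing the length of each necklace; it is the necklace analogue of the classical ``switching'' bijection that permutes the evaluation of a word. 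Applying this repeatedly realizes any permutation of the exponent sequence.

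Concretely I would proceed as follows. First, record the bijection $\pi\mapsto$ (multiset of necklaces) underlying Theorem~\ref{thm:GR}: write $\pi$ in cycle form, and for each cycle $(c_1,c_2,\dots,c_\ell)$ form the necklace whose $j$-th bead is $x_s$ where $s$ is determined by which block of the composition of $I$ the value $c_j$ falls into; descent-set containment in $I$ translates into the circular word being ``compatible'' with the weakly increasing block structure, which is what makes the correspondence a bijection. Second, invoke \cite[Lemma~3.4]{GR} to get, letter by letter, the adjacent-transposition bijection on necklaces that swaps the counts of two consecutive letters. Third, compose these to transport the evaluation attached to $I$ to the evaluation attached to $J$, noting that cycle structure (the multiset of necklace lengths) is preserved at every stage. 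Finally, pull back through the Gessel--Reutenauer correspondence on both ends to obtain the desired bijection between $\{\pi\in\S_n:D(\pi)\subseteq I\}$ and $\{\sigma\in\S_n:D(\sigma)\subseteq J\}$, cycle-structure-preserving by construction; restricting to permutations with no fixed points (no necklace of length~$1$) then answers Problem~\ref{prob:EFW}.

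The main obstacle is making the necklace-level switching map of \cite[Lemma~3.4]{GR} interact cleanly with multisets: the lemma is stated for individual necklaces/words, and one must check that applying it independently to each necklace in a multiset is well defined (independent of the chosen representative rotation) and genuinely bijective on multisets. I expect this to be routine given the cited lemma, but it is the point that needs care; everything else is bookkeeping about compositions and blocks, together with the observation that permuting a composition's parts is generated by adjacent transpositions.
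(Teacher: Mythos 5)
Your high-level plan matches the paper's: use the Gessel--Reutenauer correspondence to pass from permutations to multisets of necklaces, change the evaluation, and pass back. However, the middle step and the role of \cite[Lemma~3.4]{GR} are misdescribed, and this is a real gap in the argument as written.

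The lemma you cite is not a necklace-level ``adjacent transposition'' bijection. In Gessel--Reutenauer, Lemma~3.4 is precisely the bijection $U$ between words and multisets of necklaces that underlies Theorem~\ref{thm:GR} (your ``first, record the bijection'' step); it is not an involution that swaps bead-counts of two consecutive letters while fixing the necklace structure. So the tool you invoke for the key step does not do what you need, and your concern about making it ``interact cleanly with multisets'' is aimed at a lemma that isn't there. More importantly, the key step you set yourself is in fact trivial and needs no lemma at all: if the compositions of $I$ and $J$ differ by a permutation $\alpha$ of parts, simply relabel the alphabet by $x_j\mapsto x_{\alpha(j)}$ on every bead. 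This is manifestly a bijection on multisets of necklaces, preserves all necklace lengths (hence cycle structure), and carries evaluation $x_1^{r_1}x_2^{r_2}\cdots$ to $x_1^{s_1}x_2^{s_2}\cdots$. No adjacent-transposition decomposition, no coordinatewise subtleties, no choice-of-rotation issues. This is exactly what the paper does: write $\pi$ in cycle form, replace the entries in the $j$-th block of $I$ by $x_{\alpha(j)}$ (so the relabeling is built into the GR map), then read off $\sigma$ by ordering the periodic sequences of the resulting beads lexicographically, which is the inverse of $U$. So the fix is to drop the adjacent-transposition machinery entirely, use $U$ and $U^{-1}$ at the two ends, and insert the trivial alphabet relabeling in the middle.
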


\begin{proof}
Let $\pi\in\S_n$ with $D(\pi)\subseteq I$, where $I=\{i_1,i_2,\dots,i_k\}_<$, and let $\lambda$ be the cycle structure of $\pi$.
For convenience, define $i_0=0$ and $i_{k+1}=n$, and let $$(r_1,r_2,\dots,r_{k+1})=(i_1,i_2-i_1,\dots,i_k-i_{k-1},n-i_k)$$ be the corresponding composition on $n$.
Similarly, let $(s_1,s_2,\dots,s_{k+1})$ be the composition of $n$ corresponding to $J$. Since the associated partitions are the same, there is a permutation $\alpha$ of the indices such that
$r_j=s_{\alpha(j)}$ for $1\le j\le k+1$.

Write $\pi$ as a product of cycles and for each $1\le j\le k+1$, replace the entries $i_{j-1}+1,i_{j-1}+2,\dots,i_j$ with $x_{\alpha(j)}$,
thus obtaining a multiset of necklaces. For each bead, consider the periodic sequence obtained by reading the necklace starting at that bead. Now, order these sequences lexicographically (if there are repeated necklaces, first
choose an order among them), and label the vertices with $1,2,\dots,n$ according to this order. This yields the cycle form of a permutation $\sigma$, which clearly has cycle structure $\lambda$.
It follows from~\cite{GR} that $D(\sigma)\subseteq J$, and that the map $\pi\mapsto\sigma$ is a bijection. In fact, this map essentially amounts to applying the bijection $U$ from
\cite[Lemma 3.4]{GR} to a word whose standard permutation is $\pi^{-1}$, then replacing each $x_j$ with $x_{\alpha(j)}$ in the necklaces, and finally applying the inverse of $U$.
\end{proof}

\begin{example}
Let $n=12$, $I=\{2,8\}$ and $J=\{4,6\}$, so $(r_1,r_2,r_3)=(2,6,4)$ and $(s_1,s_2,s_3)=(4,2,6)$. Let $$\pi=3\:4\:1\:2\:5\:9\:11\:12\:6\:7\:8\:10=(1,3)(2,4)(5)(6,9)(7,11,8,12,10),$$
with $D(\pi)=\{2,8\}=I$.
After replacing $1,2$ with $x_{\alpha(1)}=x_2$, $3,4,5,6,7,8$ with $x_{\alpha(2)}=x_3$, and $9,10,11,12$ with $x_{\alpha(3)}=x_1$,
we obtain the multiset of necklaces $$(x_2,x_3)(x_2,x_3)(x_3)(x_3,x_1)(x_3,x_1,x_3,x_1,x_1).$$ The corresponding periodic sequences are
\bmt(x_2x_3x_2x_3\dots,x_3x_2x_3x_2\dots)(x_2x_3x_2x_3\dots,x_3x_2x_3x_2\dots)\\
(x_3x_3\dots)(x_3x_1x_3x_1\dots,x_1x_3x_1x_3\dots)\\
(x_3x_1x_3x_1x_1\dots,x_1x_3x_1x_1x_3\dots,x_3x_1x_1x_3x_1\dots,x_1x_1x_3x_1x_3\dots,x_1x_3x_1x_3x_1\dots),
\end{multline*}
and ordering them lexicographically we obtain the permutation $$\sigma=(5,10)(6,11)(12)(9,4)(8,2,7,1,3)=3\:7\:8\:9\:10\:11\:1\:2\:4\:5\:6\:12,$$ with $D(\sigma)=\{6\}\subseteq J$.

If instead we had had $J'=\{2,6\}$, with composition $(2,4,6)$, the permutation corresponding to $\pi$ would have been
$$\sigma'=(1,7)(2,8)(12)(11,6)(10,4,9,3,5)=7\:8\:5\:9\:10\:11\:1\:2\:3\:4\:6\:12,$$ with $D(\sigma')=\{2,6\}=J'$.
\end{example}

\section*{Acknowledgement}
The author thanks an anonymous referee for useful suggestions.


\begin{thebibliography}{99}

\bibitem{Eli} S. Elizalde, The Number of Permutations Realized By a Shift, {\it SIAM J. Discrete Math.} 23 (2009), 765--786.

\bibitem{GR} I.M. Gessel, Christophe Reutenauer,
Counting permutations with given cycle structure and descent set, {\it J. Combin. Theory Ser.} A 64 (1993), 189--215.

\bibitem{EFW} N. Eriksen, R. Freij, J. W\"astlund, Enumeration of derangements with descents in prescribed
positions, {\it Electron. J. Combin.} 16 (2009), R32.

\bibitem{EC1} R.P. Stanley, {\it Enumerative Combinatorics, Vol. I}, Wadsworth \& Brooks/Cole, Belmont, CA, 1986; reprinted by
Cambridge University Press, Cambridge, 1997.

\end{thebibliography}
\end{document}